%
%
%

\documentclass{svproc}
%
%

\usepackage{url}

\usepackage{amsmath}
\usepackage{amssymb}

\newcommand*\mycite[1]{\cite{#1}}

\newcommand*\LR{L(\mathbf R)}
\newcommand*\AD{\mathrm{AD}}
\newcommand*\Qa{{\mathbb Q_\alpha}}
\newcommand*\Qb{{\mathbb Q_\beta}}
\newcommand*\Qg{{\mathbb Q_\gamma}}
\newcommand*\Ql{{\mathbb Q_\lambda}}
\DeclareMathOperator\crit{crit}
\DeclareMathOperator\Ult{Ult}
\newcommand*\Ultra[2]{{\Ult({#1};{#2})}}

\newcommand*\comprh[2]{\left\{{#1} \mid {#2}\right\}}

\newcommand*\Gg{\Gamma_\gamma}
\newcommand*\Ggast{\Gamma^\ast_\gamma}
\newcommand*\Gglocal{\Gamma^{\text{local}}_\gamma}

\newcommand{\la}{\lambda}
\newcommand{\om}{\omega}
\newcommand{\bbQ}{\mathbb{Q}}
\newcommand{\seq}[1]{\langle {#1} \rangle}
\newcommand{\size}[1]{\left\vert {#1} \right\vert}
\newcommand{\cf}{\mathord{\mathrm{cf}}}
\newcommand{\dom}{\mathord{\mathrm{dom}}}

\begin{document}
\mainmatter              
\title{On Countable Stationary Towers}
\titlerunning{On Countable Stationary Towers}
%
\author{Yo Matsubara\inst{1} \and Toshimichi Usuba\inst{2}}
\authorrunning{Yo Matsubara and Toshimichi Usuba} 
%
\tocauthor{Yo Matsubara and Toshimichi Usuba}
\institute{Graduate School of Informatics, Nagoya University,\\
  Furo-cho, Chikusa-ku, Nagoya 464-8601, Japan\\
  \email{yom@math.nagoya-u.ac.jp}
  \and
  Faculty of Fundamental Science and Engineering, Waseda University,\\
  Okubo 3-4-1, Shinjyuku, Tokyo, 169-8555 Japan\\
  \email{usuba@waseda.jp}
}

\maketitle              

\begin{abstract}
  In this paper, we investigate properties of countable stationary towers.
  We derive the regularity properties of sets of reals in $\LR$ from some
  properties of countable stationary towers without explcit use of strong
  large cardinals such as Woodin cardinals.  We also introduce the notion of
  semiprecipitousness and investigate its relation to precipitousness and
  presaturation of countable stationary towers.  We show that precipitousness
  of countable stationary towers of weakly compact height implies the
  regularity properties of sets of reals in $\LR$.
\keywords{stationary tower, regularity properties of sets of reals}
\end{abstract}
\section{Introduction}
 In his expository article (written in Japanese) \mycite{Takeuti}, Gaisi Takeuti
called the development of a series of results concerning $\LR$ and large
cardinals, by Martin, Steel, and Woodin during the second half of the
1980's, a revolutionary leap in modern set theory.  Long before the 1980's,
Takeuti and Solovay pointed out that $\LR$, the smallest inner model
containing every real, is a natural candidate for an inner model of $\AD$
(axiom of determinacy).\mycite{Kanamori} What Takeuti called a revolutionary
leap has confirmed that their intuition was correct.

The first result in this revolutionary leap was proving various regularity
properties, such as Lebesgue measurability, the Baire property, and the
perfect set property, hold for every set of reals in $\LR$ from the existence
of a supercompact cardinal.  Soon after, the large cardinal hypothesis was
weakened and the conclusion was strengthened drastically.  For
convenience, throughout this paper, by regularity properties, we mean
Lebesgue measurability, the Baire property, and the perfect set property.

Woodin realized the existence of a certain generic elementary embedding
would imply regularity properties of sets of reals in $\LR$.\mycite{Woodin circulated notes}

\begin{theorem}[Woodin]\label{thm:Woodin}
  Let $\lambda$ be an inaccessible cardinal.
  Suppose there is a partial order $\mathbb P$ such that $\mathbb P$ forces the following:

  There is a generic elementary embedding $j : V \to M$ with
  \begin{enumerate}
  \item $\crit(j)=\aleph^V_1$ and $j(\aleph^V_1)=\lambda$
  \item $M$ is closed under taking $\omega$ sequences in $V^{\mathbb P}$
  \item every real is obtained by a small (size less than $\lambda$) forcing,
    i.e. in $V[G]$, where $G$ is $\mathbb P$-generic, for each real $x$ there are a poset $\mathbb P_x$ in the
    ground model $V$ with cardinality less than $\lambda$ and a
    $\mathbb P_x$-generic $H\in V[G]$ such that $x\in V[H]$.
  \end{enumerate}

  Then every set of reals in $\LR$ satisfies the regularity properties. \qed
\end{theorem}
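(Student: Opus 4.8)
The plan is to adapt Solovay's classical analysis of the L\'evy collapse model to the extension $V[G]$, with the generic elementary embedding $j$ taking over the role that the weak homogeneity of the L\'evy collapse plays in Solovay's proof. First I would pin down the shape of $V[G]$. Working there, $\crit(j)=\aleph_1^V$ and $j(\aleph_1^V)=\lambda$ give $\omega_1^M=\lambda$ by elementarity, and since $M\subseteq V[G]$ is closed under $\omega$-sequences, a countable ordinal of $V[G]$ is countable in $M$ and conversely, so $\omega_1^{V[G]}=\omega_1^M=\lambda$. Together with the inaccessibility of $\lambda$ in $V$ and clause~(3), this says that $V[G]$ has the defining features of a Solovay-type extension of $V$: the value $\lambda=\omega_1^{V[G]}$ is inaccessible in $V$, and every real of $V[G]$ lies in a $V$-generic extension by a poset of size $<\omega_1^{V[G]}$.

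Next I would transport $\LR$ along $j$. Since $M$ is closed under $\omega$-sequences and $M\subseteq V[G]$ we have $\mathbf R^M=\mathbf R^{V[G]}$, hence $\LR^M=\LR^{V[G]}$, so $j\restriction\LR^V$ is an elementary embedding of $\LR^V$ into $\LR^{V[G]}$, and it fixes every real of $V$ because $\crit(j)>\omega$. Thus $\LR^V$ and $\LR^{V[G]}$ satisfy the same sentences, and if $A=\{x:\LR^V\models\varphi(x,s,\xi)\}$ with $s\in\mathbf R^V$ and $\xi$ an ordinal, then $j(A)=\{x:\LR^{V[G]}\models\varphi(x,s,j(\xi))\}$ and $A=j(A)\cap\mathbf R^V$. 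In particular every set of reals of $\LR^{V[G]}$ is ordinal-definable over $V[G]$ from a real, since $\LR^{V[G]}$ is a definable inner model of $V[G]$ whose sets of reals are definable over it from a real and ordinals.

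I would then carry out Solovay's argument inside $V[G]$ to show that there every set of reals ordinal-definable from a real is Lebesgue measurable, has the Baire property, and has the perfect set property, using the random algebra, Cohen forcing, and the forcing adding a perfect set of mutually generic reals, respectively. Given such a set $X=\{x:V[G]\models\Psi(x,s,\xi)\}$, clause~(3) provides $Q\in V$ with $|Q|<\lambda$ and a $V$-generic $H\in V[G]$ with $s\in W:=V[H]$; as $\lambda$ is inaccessible in $V$ we get $(2^{\aleph_0})^W<\lambda=\omega_1^{V[G]}$, so $(2^{\aleph_0})^W$ is countable in $V[G]$ and the reals random over $W$ (resp. Cohen over $W$) are conull (resp. comeager) in $V[G]$. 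One then produces a Borel set $B$ with a code in $W$ such that $X$ agrees with $B$ off a null (resp. meager) set, whence $X$ is measurable (resp. has the Baire property); the perfect-set alternative splits on whether $X\subseteq W$, in which case $|X|\le(2^{\aleph_0})^W<\omega_1^{V[G]}$ so $X$ is countable, or $X\not\subseteq W$, in which case a perfect subset of $X$ can be found. Combining: $\LR^{V[G]}\models$ ``every set of reals has the regularity properties'', a first-order sentence which is absolute between $V[G]$ and $\LR^{V[G]}$ since all Borel codes are present; hence by the previous paragraph this sentence also holds in $\LR^V$, and, again by absoluteness and $\mathbf R^{\LR^V}=\mathbf R^V$, every set of reals in $\LR^V$ has the regularity properties in $V$.

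The delicate point is the extraction of the Borel code $B$ in the third step: showing that membership in the ordinal-definable set $X$ is decided over the small model $W$ on the random (resp. Cohen) reals. In Solovay's original proof this is where one invokes the weak homogeneity of the L\'evy collapse together with the factorization $V[G]=W[c][\operatorname{Col}(\omega,{<}\lambda)]$; here $V[G]$ need not be a L\'evy-collapse extension, and it is exactly the embedding $j$ — elementary from $V$, which sees only the small forcing extensions, onto an inner model $M$ with $\mathbf R^M=\mathbf R^{V[G]}$ that computes $\LR^{V[G]}$ correctly — that supplies the missing homogeneity, letting one conclude that the truth of $\Psi(c,s,\xi)$ over $V[G]$ depends only on the small extension $W[c]$. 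Making this uniform over the relevant small posets is the heart of the argument; everything else is bookkeeping on top of Solovay's classical analysis.
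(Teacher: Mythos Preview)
The paper does not prove this theorem; it is stated with attribution to Woodin and a reference to his circulated notes, and simply closed with \qed. There is thus no argument in the paper to compare yours against.

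Your outline follows the standard route and the first two paragraphs are correct: clauses (1) and (2) give $\omega_1^{V[G]}=\lambda$ and $\mathbf{R}^M=\mathbf{R}^{V[G]}$, hence $\LR^M=\LR^{V[G]}$, and $j$ restricts to an elementary map $\LR^V\to\LR^{V[G]}$ fixing each ground-model real. Since the regularity properties of a set $A\in\LR$ are absolute between $V$ and $\LR^V$, it then suffices to verify them for all sets of reals in $\LR^{V[G]}$, and you are right that clause (3) together with the inaccessibility of $\lambda$ places $V[G]$ in a Solovay-type situation over $V$.

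Where the proposal falls short is exactly where you yourself flag it. Your last paragraph asserts that $j$ ``supplies the missing homogeneity'' and that the truth of $\Psi(c,s,\xi)$ in $\LR^{V[G]}$ depends only on the small extension $W[c]$, but you do not actually carry this out, and it is the substance of the theorem rather than bookkeeping. The embedding $j$ has domain $V$, not $W=V[H]$ or $W[c]$; neither $s$ nor $c$ need lie in $V$, and the ordinal parameter $\xi$ need not be in the range of $j$, so one cannot simply pull the definition of $X$ back along $j$. Turning your intuition into a proof requires either lifting $j$ to the intermediate small extensions---which means producing, inside $V[G]$, an $M$-generic for $j(\mathbb{P}_x)$ compatible with $j``H$ and checking that the lifted target still computes $\LR^{V[G]}$ correctly---or arguing via the forcing relation that the relevant $\LR$-theory is already decided below. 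Either route is where the real work lies, and your sketch stops just before it; what you have written is a correct reduction to the key lemma, not a proof of it.
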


Woodin developed the notion of stationary tower forcing to produce a
generic elementary embedding satisfying the hypothesis of the above
theorem.  Although Woodin's theory of stationary towers is more general,
we will concentrate our attention on countable stationary towers.  We
present a brief sketch of the notion of countable stationary tower forcing.  We refer the
reader to Larson's book \mycite{Larson} for a more thorough treatment of
stationary tower forcing.

\begin{definition}
  Let $\gamma$ be an uncountable limit ordinal.  We denote the set of all
  $p\in V_\gamma$ such that $p$ is a stationary subset of $[\bigcup p]^\om$ by $\Qg$.
  For $p,q\in\Qg$, we say $p\le_\Qg q$ if $\bigcup p\supseteq\bigcup q$ and
  $\forall x\in p(x\cap\bigcup q\in q)$.
\end{definition}

The partial order $\langle\Qg,\le_\Qg\rangle$ is known as a countable stationary tower of height
$\gamma$.  Throughout this paper, whenever we mention a stationary tower, we are
assuming its height is an uncountable limit ordinal.
 
If $G$ is a $\Qg$-generic filter over a ground model $V$, then we can take the
generic ultrapower $\Ultra V G$ of $V$.  We refer a reader to Larson's book
\mycite{Larson} for the definitions and basic facts concerning generic ultrapowers
induced by stationary tower forcing.

In general, there is no guarantee that the generic ultrapower $\Ultra V G$
is well-founded.

\begin{definition}
  $\Qg$ is said to be precipitous if $\Qg$ forces that $\Ultra V G$ is
  well-founded.
\end{definition}

\begin{remark}
  It turns out that the precipitousness of $\Qg$ is equivalent to ``$\Qg$
  forces that $\Ultra {V_\delta} G$ is well-founded where $\delta=(|V_\gamma|^+)^V$''. This motivated the
  next definition:
\end{remark}

\begin{definition}
  $\Qg$ is said to be semiprecipitous if $\Qg$ forces that $\Ultra {V_\gamma} G$ is well-founded.
\end{definition}

If $\Ultra V G$ is well-founded, then, by taking the transitive collapse of
$\Ultra V G$, we can define a generic elementary embedding $j : V \to M$ by
$j(x)=[c_x]$, where $c_x$ is the constant function whose domain is $[\aleph_1]^\om$ and that
takes the constant value $x$.

Woodin proved that if $\lambda$ is a large cardinal, such as a supercompact
cardinal, then $\Ql$ is precipitous.  Actually, he derived a somewhat stronger
property than precipitousness  from the large cardinal in his proof.

\begin{definition}
  $\Qg$ is said to be presaturated if $\Qg$ forces that $\Ultra V G$ is
  closed under taking $\omega$ sequences in generic extensions.
\end{definition}

\begin{remark}
  It is easy to see that presaturation of $\Qg$ implies
  precipitousness of $\Qg$.  It turns out that the presaturation of $\Qg$
  is equivalent to ``$\Qg$ forces $\cf(\gamma)>\omega$''.  Furthermore, the
  presaturation of $\Qg$ implies that $\Qg$ forces $j(\aleph^V_1)=\gamma$, where $j$
  is the corresponding generic elementary embedding.
\end{remark}

Woodin proved that if $\lambda$ is a supercompact cardinal, then $\Ql$
is presaturated and satisfies the hypotheses of Theorem~\ref{thm:Woodin}.
Therefore, the existence of a supercompact cardinal implies that
every set of reals in $\LR$ satisfies the regularity properties.

Woodin later proved that if $\lambda$ is a Woodin cardinal which is
substantially weaker than supercompact, then $\Ql$ is
presaturated.  
A generic elementary embedding generated by  $\Ql$ with Woodin $\lambda$ always satisfies
properties 1 and 2 of the hypotheses of Theorem 1.
But property 3 can fail.
It is known that the existence of one Woodin cardinal, in fact
even $\omega$ many Woodin cardinals, could not imply that every set of
reals in $\LR$ satisfies the regularity properties.  In particular J.\ Steel
\mycite{Steel} proved that if the existence of infinitely many
Woodin cardinals is consistent, then it is consistent with the
axiom of choice holding in $\LR$.  It turns out that if there are
infinitely many Woodin cardinals and a measurable cardinal
above them, then every set of reals in $\LR$ satisfies the regularity
properties.

In this paper, we investigate the following
questions:

\begin{enumerate}
\item Can we derive the regularity properties of sets of reals in $\LR$
  from properties of countable stationary towers, such as
  precipitousness or presaturation, without explicit use of strong
  large cardinals such as supercompact cardinals or Woodin
  cardinals?
\item Are there any relations between presaturation, precipitousness,
  and semiprecipitousness other than obvious implications?
\end{enumerate}

\section{Results}
Burke used the following game to characterize the
precipitousness of a stationary tower. \mycite{Burke}

For an uncountable limit ordinal $\gamma$,
let $\Gamma_\gamma$ denote the following two player game of length $\om$:
At each inning, players I and II choose conditions $p_n, q_n \in \bbQ_\gamma$ alternately with
$p_0 \ge q_0 \ge p_1 \ge q_1 \ge \cdots$.
For the play $\seq{p_n, q_n \mid n<\om}$,
player II wins if there is some $x \in [V_\gamma]^\om$
such that $x \cap \bigcup p_n \in p_n$ for every $n<\om$.
Otherwise I wins. The following theorem is due to Burke \mycite{Burke}:

\begin{theorem}\label{thm:Burke}
  Let $\gamma$ be an uncountable limit ordinal.
  Then the following are equivalent:
  \begin{enumerate}
  \item $\bbQ_\gamma$ is precipitous.
  \item Player I does not have a winning strategy in the game $\Gamma_\gamma$.
  \end{enumerate}
  \qed
\end{theorem}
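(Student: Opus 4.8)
The plan is to prove the two implications by their contrapositives; the game $\Gamma_\gamma$ is modeled on the standard game characterization of precipitousness for ideals, and the proof follows the same pattern. Throughout I use the combinatorial picture of the generic ultrapower $\Ultra V G$: every element of $\Ultra V G$ is $[f]_G$ for a ground-model function $f$ with $\dom f=[\bigcup r]^\omega$ for some $r\in G$; writing $j\colon V\to\Ultra V G$ for the generic embedding and $\mathrm{id}_r$ for the identity on $[\bigcup r]^\omega$, one has $r\in G$ iff $[\mathrm{id}_r]_G\in j(r)$; and for ground-model $f\colon p\to V$ and $g\colon q\to V$ (extended by $\emptyset$ off $q$) with $p\le_{\bbQ_\gamma}q$, the condition $p$ forces $[f]_G\in[g]_G$ iff $\{x\in p\mid f(x)\notin g(x\cap\bigcup q)\}$ is non-stationary.

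\textbf{(1)$\Rightarrow$(2).} Assume player I has a winning strategy $\sigma$, put $p_0=\sigma(\seq{})$, and suppose toward a contradiction that $p_0$ does not force $\Ultra V G$ to be ill-founded; fix a generic $G\ni p_0$ with $M:=\Ultra V G$ well-founded, hence transitive. Working in $V[G]$, I build a play $\seq{p_0,q_0,p_1,q_1,\dots}$ of $\Gamma_\gamma$ following $\sigma$ with every condition in $G$: at a $\sigma$-position $\vec p$ with last move $r$, the set $\{s\le_{\bbQ_\gamma}r\mid\exists q\le_{\bbQ_\gamma}r\ \ s\le_{\bbQ_\gamma}\sigma(\vec p\frown q)\}$ is dense below $r$ (given $t\le_{\bbQ_\gamma}r$, the move $\sigma(\vec p\frown t)\le_{\bbQ_\gamma}t$ lies in it, via $q=t$), so by genericity there is $q_n\le_{\bbQ_\gamma}p_n$ with $p_{n+1}:=\sigma(\vec p_n\frown q_n)\in G$, whence also $q_n\in G$; here $\vec p_n=\seq{p_0,q_0,\dots,p_n}\in V$ since it is finite. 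As $\sigma$ wins, for any $x$ threading $\vec p_n$ the tree of $\sigma$-positions extending $\vec p_n$ that are still threaded by $x$ is well-founded, so $F_n\colon p_n\to\ORD$ given by $F_n(x)=\operatorname{rk}_\sigma(\vec p_n,x)$ belongs to $V$. By \L{}o\'s's theorem $[F_n]_G=\operatorname{rk}_{j(\sigma)}(j(\vec p_n),[\mathrm{id}_{p_n}]_G)$ as computed in $M$, and this is a genuine ordinal of $M$ because $M\models$ ``$j(\sigma)$ is winning'' by elementarity; since $q_n,p_{n+1}\in G$, the element $[\mathrm{id}_{p_{n+1}}]_G$ threads $j(\vec p_{n+1})$ in $M$, so the defining rank recursion inside $M$ yields $[F_n]_G\ge[F_{n+1}]_G+1$ (using that $\operatorname{rk}_\sigma(\vec p_n,x)$ depends only on $x\cap\bigcup p_n$, so one representative serves at both levels). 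Then $\seq{[F_n]_G\mid n<\omega}\in V[G]$ is an $\in^M$-descending sequence of ordinals of $M$, contradicting well-foundedness of $M$; hence $p_0$ forces $\Ultra V G$ ill-founded and $\bbQ_\gamma$ is not precipitous.

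\textbf{(2)$\Rightarrow$(1).} Assume $\bbQ_\gamma$ is not precipitous; then there are a condition $p^*$ and names $\seq{\dot f_n\mid n<\omega}$ for functions into $V$ with $p^*$ forcing $[\dot f_{n+1}]_G\in[\dot f_n]_G$ for all $n$. Let $\delta$ be a cardinal above $\size{\bbQ_\gamma}$. Using that the set of conditions deciding $[\dot f_n]_G=[\check H]_G$ for some ground-model $H$ is dense (every element of $\Ultra V G$ being represented by a ground-model function), I build by recursion on length a tree $\seq{p_s\mid s\in{}^{<\omega}\delta}$ of conditions below $p^*$ with $p_s\ge_{\bbQ_\gamma}p_t$ for $s\subseteq t$, with $\{p_{s\frown k}\mid k<\delta\}$ predense below $p_s$, and with $F_s\colon p_s\to V$ in $V$ such that $p_s$ (for $\size s\ge 1$) decides $[\dot f_{\size s-1}]_G=[F_s]_G$. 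Then $p_{s\frown k}$ forces $[F_{s\frown k}]_G\in[F_s]_G$, so $\{x\in p_{s\frown k}\mid F_{s\frown k}(x)\notin F_s(x\cap\bigcup p_s)\}$ is non-stationary; deleting it from $p_{s\frown k}$ (which keeps the set stationary, hence still a condition, and keeps $\{p_{s\frown k}\mid k<\delta\}$ predense below $p_s$) I may assume $F_{s\frown k}(x)\in F_s(x\cap\bigcup p_s)$ for every $x\in p_{s\frown k}$. Now player I opens with $p^*$ and maintains a node $s_n\in{}^n\delta$ together with a condition $r_n\le_{\bbQ_\gamma}p_{s_n}$ below II's last move (starting with $s_0=\seq{}$, $r_0=p^*$); when II plays $q\le_{\bbQ_\gamma}r_n$, predensity gives $k$ with $p_{s_n\frown k}$ compatible with $q$, and I plays a common extension $r_{n+1}$ of $q$ and $p_{s_n\frown k}$, setting $s_{n+1}=s_n\frown k$. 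If some $x$ threaded the resulting play, then $x\cap\bigcup r_m\in r_m$, and hence $x\cap\bigcup p_{s_m}\in p_{s_m}$, for all $m$, so $F_{s_0}(x\cap\bigcup p_{s_0})\ni F_{s_1}(x\cap\bigcup p_{s_1})\ni\cdots$ would be an infinite $\in$-descending chain, which is impossible; so the strategy is winning for I.

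I expect the main obstacle to be the tree construction in \textbf{(2)$\Rightarrow$(1)}: since $[\dot f_n]_G$ lies in the ill-founded part of $\Ultra V G$, one cannot decide its value but only its representation by \emph{some} ground-model function, and this level-by-level choice of a name has to be reconciled both with the passage to maximal antichains (needed for predensity) and with the pre-shrinking that converts ``$p_{s\frown k}$ forces $[F_{s\frown k}]_G\in[F_s]_G$'' into the pointwise inequality — one must check that this last refinement does not destroy predensity below $p_s$. The parallel subtlety in \textbf{(1)$\Rightarrow$(2)}, that $M$ computes the ranks $\operatorname{rk}_\sigma$ correctly, is comparatively routine, following from transitivity of $M$ together with $M\models$ ``$j(\sigma)$ is winning''.
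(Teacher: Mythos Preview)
The paper does not prove this theorem at all: it is attributed to Burke and closed with a bare \qed, so there is no paper proof to compare your attempt against. Your overall plan is the standard one, and the $(2)\Rightarrow(1)$ direction is carried out correctly, including the point you flag about shrinking by a non-stationary set preserving predensity.

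In the $(1)\Rightarrow(2)$ direction, however, your rank function is not set up quite right. You define $F_n(x)$, for $x\in p_n$, as the rank of $\vec p_n$ in the tree of $\sigma$-plays extending $\vec p_n$ that are ``threaded by $x$''. But $x\subseteq\bigcup p_n$, so for any later condition $r$ in such a play one has $x\cap\bigcup r=x$; thus ``$x$ threads $r$'' means literally $x\in r$. In particular, for $x'\in p_{n+1}$ and $x=x'\cap\bigcup p_n$, the play $\vec p_{n+1}$ need not lie in the tree indexed by $x$ (you would need $x\in p_{n+1}$, which does not follow from $x'\in p_{n+1}$ when $\bigcup p_{n+1}\supsetneq\bigcup p_n$), and even if it did, the subtree below it is not the tree used to compute $F_{n+1}(x')$, since the threading element has changed from $x$ to $x'$. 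So the key inequality $F_{n+1}(x')<F_n(x'\cap\bigcup p_n)$ does not follow from your definition, and your parenthetical remark that ``$\operatorname{rk}_\sigma(\vec p_n,x)$ depends only on $x\cap\bigcup p_n$'' does not help, since $x\in p_n$ already gives $x=x\cap\bigcup p_n$.

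The fix is to rank a single tree of \emph{pairs}: take nodes to be finite sequences $\seq{\seq{q_i,x_i}\mid i\le m}$ with the $q_i$ following $\sigma$, $x_i\in q_i$, and $x_{i+1}\cap\bigcup q_i=x_i$. An infinite branch yields an infinite $\sigma$-play together with a thread $\bigcup_i x_i$, contradicting that $\sigma$ wins, so this tree is well-founded; set $F_n(x)$ to be the rank of the node corresponding to $(\vec p_n,x)$. With this definition $(\vec p_{n+1},x')$ is literally a child of $(\vec p_n,x'\cap\bigcup p_n)$, and the desired strict inequality is immediate. This is exactly the tree $T$ the paper itself uses in the proof of the proposition immediately following Burke's theorem.
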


For a certain $\gamma$, semiprecipitousness of $\bbQ_\gamma$ is equivalent to
precipitousness of $\bbQ_\gamma$.

\begin{proposition}\label{prop:Usuba03}
Let $\la$ be a singular cardinal such that $\size{V_\la}=\la$
and $\om<\cf(\la)<\la$.
If $\bbQ_\la$ is semiprecipitous, i.e. $\Vdash_{\bbQ_\la}$``\,$\mathrm{Ult}(V_\la; \dot G)$ is well-founded'',
then $\bbQ_\la$ is precipitous.
\end{proposition}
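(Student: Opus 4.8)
The plan is to prove the contrapositive. Suppose $\bbQ_\la$ is not precipitous. By Theorem~\ref{thm:Burke}, player~I has a winning strategy $\sigma$ in $\Gamma_\la$; let $p_0 = \sigma(\seq{})$ be her opening move. It suffices to produce a $\bbQ_\la$-name $\seq{\dot g_n \mid n<\om}$ for functions taking ordinal values below $\la$ such that $p_0 \Vdash [\dot g_{n+1}] < [\dot g_n]$ in $\Ult(V_\la; \dot G)$ for all $n$: such a name exhibits an $\in$-descending sequence of ordinals of $\Ult(V_\la; \dot G)$, so $p_0$ forces that ultrapower to be ill-founded, contradicting semiprecipitousness. (Conversely, from an $\in$-descending sequence in $\Ult(V_\la; \dot G)$ one gets a descending sequence of ordinals by passing to ranks of representing values; and a function into $\la$ of rank $<\la$ indeed lies in $V_\la$, since every $p\in\bbQ_\la$ has $\operatorname{rank}(\bigcup p)<\la$, hence $[\bigcup p]^\om$ has rank $<\la$ too.)

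To build the name, attach to each countable $x\subseteq V_\la$ the tree $T_x$ of finite legal positions $\seq{q_0,\dots,q_{k-1}}$ of player~II against $\sigma$ with $x\cap\bigcup\sigma(\seq{q_0,\dots,q_{k-1}})\in\sigma(\seq{q_0,\dots,q_{k-1}})$, ordered by end-extension. An infinite branch of some $T_x$ would give a play consistent with $\sigma$ in which $x$ witnesses a win for player~II; as $\sigma$ is winning, every $T_x$ is well-founded and has a rank function. Now mirror the standard passage from a winning strategy for player~I to an ill-founded generic ultrapower: along a $\bbQ_\la$-generic $G$, the generic filter determines a play of $\Gamma_\la$ against $\sigma$ whose player~II moves lie in $G$, and letting $g_n(x)$ be the $T_x$-rank of the position reached after $n$ rounds gives a descending sequence because, player~I playing $\sigma$, the $T_x$-rank strictly drops at each further round. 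The three hypotheses are exactly what keeps the $g_n$ inside $V_\la$. Since $\cf(\la)>\om$, all conditions of $G$ occurring in the play together have union $B$ with $\operatorname{rank}(B)<\la$, so only $\sigma$ and the $T_x$ restricted to positions inside $V_{\operatorname{rank}(B)}$ are relevant; since $\size{V_\la}=\la$ (equivalently $\la=\beth_\la$), for each $\eta<\la$ the restricted tree $T_x\restriction V_\eta$ has size $<\la$, hence rank bounded by a fixed ordinal $<\la$; and since $\cf(\la)<\la$, fixing a cofinal sequence $\seq{\la_i\mid i<\cf(\la)}$ in $\la$ lets one bound the relevant truncation level and index the truncated trees uniformly, so that the ranks occurring as values of $g_n$ are bounded by an ordinal $<\la$ and $g_n$ names an element of $V_\la$. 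The forcing relation for stationary towers, applied to the play $G$ is making through the trees $T_x$, then gives $p_0\Vdash[\dot g_{n+1}]<[\dot g_n]$.

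The heart of the argument --- and the step I expect to be the main obstacle --- is precisely this confinement to $V_\la$: the witnesses must be built inside $V_\la$ although $\sigma$ itself has rank about $\la$, and controlling the ranks of the truncated trees $T_x\restriction V_\eta$ as the generic play lengthens (so that they shrink rather than grow) is where all three hypotheses have to be combined carefully. A secondary complication, which is why the $\dot g_n$ cannot be replaced by ground-model functions, is that when player~I wins the conditions of $G$ occurring along the play have no lower bound in $G$, so the inequality $p_0\Vdash[\dot g_{n+1}]<[\dot g_n]$ must be read off from how $G$ descends through the trees rather than from one fixed decreasing chain of conditions.
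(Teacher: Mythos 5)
There is a genuine gap, and it sits exactly where you predicted: the ``confinement to $V_\la$'' step is not actually carried out, and the sketch you give for it does not work. First, the claim ``since $\cf(\la)>\om$, all conditions of $G$ occurring in the play together have union $B$ with $\mathrm{rank}(B)<\la$'' is unjustified: the play is assembled in $V[G]$, and $\bbQ_\la$ can easily make $\cf(\la)=\om$ there (e.g.\ it always makes $\om_1^V$ countable, so if $\cf^V(\la)=\om_1$ the ranks of the $q_n$ may well be cofinal in $\la$); moreover the dense sets $\{\sigma(q_0,\dotsc,q_{n-1},q)\mid q\le p_n\}$ that the generic must meet contain conditions of unbounded rank, so no truncation level $\eta<\la$ fixed in advance confines the play. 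Second, once you are forced to let the truncation level $\eta_n$ grow with $n$, the inequality $[\dot g_{n+1}]<[\dot g_n]$ breaks down: extending the position decreases rank \emph{within a fixed tree}, but passing from $T_x\restriction V_{\eta_n}$ to the larger tree $T_x\restriction V_{\eta_{n+1}}$ increases ranks, and nothing in your sketch shows the net effect is a strict decrease. (Using the untruncated $T_x$ restores monotonicity but puts the ranks up near $\size{V_\la}^+$, which only witnesses non-precipitousness --- your hypothesis --- not failure of semiprecipitousness.) So the argument as written does not close.

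The paper resolves precisely this obstacle by performing the truncation \emph{inside the ultrapower} rather than in the ground model. It codes the whole well-founded tree $T$ of positions-with-witnesses as a unary predicate on $V_\la$, notes that $\seq{V_\la,\in,T}$ satisfies ``for every $\alpha$, $T\cap V_\alpha$ exists and carries a rank function into some ordinal'' (this is where $\size{V_\la}=\la$ is used), and then takes the ultrapower $j:\seq{V_\la,\in,T}\to\seq{M,\in,j(T)}$, which is well-founded by semiprecipitousness. The hypothesis $\om<\cf(\la)<\la$ is used to show $\sup(j``\la)=\gamma$ is \emph{strictly below} $M\cap\mathrm{ON}$, so the single truncation $T^*=j(T)\cap M_\gamma$ is an element of $M$, comes with a rank function by elementarity, and is therefore genuinely well-founded in $V[G]$; the generic play then threads a genuine infinite branch through $T^*$ (all nodes $\seq{\seq{j(q_i),j``x_i}\mid i\le n}$ have rank below $\gamma$), giving the contradiction. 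If you want to salvage your plan, this is the mechanism to import: do not try to pre-assign bounded-valued rank functions in $V$; instead use the assumed well-foundedness of $\Ult(V_\la;G)$ to obtain, after the fact, a single well-founded truncated tree at level $\sup j``\la$.
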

\begin{proof}
Suppose $\bbQ_\la$ is not precipitous.
Then player I has a winning strategy $\sigma$ in $\Gamma_\la$.

Let $T$ be the set of all finite sequences $\seq{\seq{q_i,x_i} \mid i \le n}$ 
such that:
\begin{enumerate}
\item $q_i \in \bbQ_\la$ and $x_i \in q_i$.
\item $x_{i+1} \cap \bigcup q_i =x_i$ for every $i <n$.
\item $\sigma(\emptyset) \ge q_0 \ge \sigma(q_0) \ge q_1 \ge \cdots \ge \sigma(q_0,\dotsc, q_{n-1} )\ge q_n$.
\end{enumerate}
For $s, t \in T$, we  define $\le_T$   by $s \le_T t$ if $s$ is an extension of $t$.
Since $\sigma$ is a winning strategy for player I,
$T$ is well-founded, i.e., there is no infinite descending sequence in $T$.

We note that $T \subseteq V_\la$.
Consider the structure $\seq{V_\la, \in, T}$,
where we identify $T$ as a unary predicate.
Since $\size{V_\la}=\la$, we can check the following:  
\begin{enumerate}
\item $\seq{V_\la, \in,T}\vDash$``$T$ is well-founded''.
\item Since $T \cap V_\alpha \in V_\la$ for every $\alpha<\la$,
we have $\seq{V_\la, \in,T} \vDash$``$\forall \alpha \exists x (x=T \cap V_\alpha)$''.
\item $\seq{V_\la, \in,T} \vDash$``$\forall \alpha, T \cap V_\alpha$ is well-founded,
and the rank function $f:T \cap V_\alpha \to \beta$ exists for some 
ordinal $\beta$''.
\end{enumerate}

Take a $\Ql$-generic $G$ with $\sigma(\emptyset) \in G$.
In $V[G]$, we can take the ultrapower of the structure $\seq{V_\la, \in, T}$
by $G$.
Since $\mathrm{Ult}(V_\la; G)$ is well-founded,
the ultrapower of $\seq{V_\la, \in, T}$
is also well-founded.
Let $M=\seq{M, \in, j(T)}$ be the transitive collapse of the ultrapower,
and $j:V_\la \to M$ be the elementary embedding.
Then, because $\om<\cf(\la)<\la$,
we have that $j``\la$ is bounded in $M \cap\mathrm{ON}$.
Let $\gamma=\sup(j``\la) \in M \cap \mathrm{ON}$.
Consider $T^*=j(T) \cap M_\gamma$.
By (2) above, we have $T^* \in M$,
and by (3), there exists the rank function $f:T^* \to \beta$ in $M$.
Hence, $T^*$ is also well-founded in $V[G]$,
and there is no infinite descending sequence of $T^*$ in $V[G]$.

By induction on $n<\om$,
we define a descending sequence $\seq{p_n,q_n \mid n<\om}$ as follows:
First, let $p_0 =\sigma(\emptyset) \in G$.
Then $\{\sigma(q) \mid q \le p_0\}$ is dense below $p_0$,
so we can take $q_0 \le p_0$ and $p_1=\sigma(q_0) \le q_0$ with
$p_1 \in G$. We have $q_0 \in G$.
Again, since $\{\sigma(q_0,q) \mid q \le p_1\}$ is dense below $p_1$,
we can choose $q_1 \le p_1$ and $p_2=\sigma(q_0,q_1) \in G$,
and so on.
We know $j(p_n)=j(\sigma)(j(q_0),\dotsc, j(q_{n-1})) \ge j(q_n)$.
For $n<\om$, let $x_n=\bigcup q_n$.
We know $j``x_n \in j(q_n)$,
and $j``x_{n+1} \cap \bigcup(j(q_n))=j``x_n$.
Because $j(q_n), j``x_n \in M_\gamma$,
we have that $\seq{\seq{j(q_i),j``x_i} \mid i \le n} \in T^*$ for every $n<\om$.
Therefore, the sequence $\seq{\seq{j(q_n), j``x_n} \mid n<\om}$ induces an infinite descending sequence in $T^*$.
This is a contradiction.
\qed
\end{proof}

\begin{remark}
  The reader might wonder if the hypothesis of the last
  proposition is vacuous.  In our next paper \mycite{Matsubara-Usuba}, we
  prove the consistency of precipitousness of $\Ql$ where $\lambda$ satisfies
  the conditions of Proposition~\ref{prop:Usuba03} assuming a sufficiently strong
  large cardinal hypothesis.
\end{remark}

Later in this paper, we will show that, in general,
semiprecipitousness of $\Ql$ does not imply precipitousness.

We can also use Burke's game to prove the following proposition:

\begin{proposition}
  Suppose $\alpha$ and $\beta$ are uncountable limit ordinals
  with $\alpha<\beta$.  If $\Qa$ is precipitous, then, for each $p\in\Qa$, there
  is some $p^\ast\in\Qb$ such that $p^\ast\le p$ and $p^\ast$ $\Vdash_{\bbQ_\beta}$
  ``$\dot G\cap\Qa$ is $\Qa$-generic'' where $\dot G$ is the canonical $\Qb$-name for a generic filter.
\end{proposition}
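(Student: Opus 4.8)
The plan is to translate ``$\dot G\cap\Qa$ is $\Qa$-generic'' into a density statement inside $\Qb$ and then, supposing it fails, to steal a winning strategy for player~I in Burke's game $\Gamma_\alpha$, contradicting precipitousness of $\Qa$ via Theorem~\ref{thm:Burke}.

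First I would carry out the reductions. Since each condition of $\Qa$ is a stationary subset of $[\bigcup p]^\om$ with $\bigcup p\in V_\alpha\subseteq V_\beta$, we have $\Qa\subseteq\Qb$, and the two orderings agree on $\Qa$. Moreover, for any $\Qb$-generic $G$ the set $G\cap\Qa$ is automatically a filter on $\Qa$: it is upward closed in $\Qa$, and if $p_1,p_2\in G\cap\Qa$ have a common lower bound $r\in G$, then the greatest lower bound $p_1\wedge p_2=\{x\in[\bigcup p_1\cup\bigcup p_2]^\om\mid x\cap\bigcup p_i\in p_i\text{ for }i=1,2\}$ is computed identically in $\Qa$ and $\Qb$, lies in $\Qa$, satisfies $r\le_{\Qb}p_1\wedge p_2$, and hence lies in $G\cap\Qa$ and is below $p_1,p_2$ in $\Qa$. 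So ``$\dot G\cap\Qa$ is $\Qa$-generic'' just means $\dot G$ meets every dense $D\subseteq\Qa$ lying in $V$, and we may restrict to such $D$ below $p$. Writing $D^\uparrow=\{s\in\Qb\mid\exists d\in D\ (s\le_{\Qb}d)\}$, a condition $r\le_{\Qb}p$ forces ``$\dot G\cap D\ne\emptyset$'' exactly when $D^\uparrow$ is dense below $r$; so the task is to find $p^\ast\le_{\Qb}p$ below which $D^\uparrow$ is dense for every $V$-dense $D\subseteq\Qa$ below $p$. Suppose no such $p^\ast$ exists. Then the set of ``bad'' conditions $r\le_{\Qb}p$ — those for which there is a $V$-dense $D_r\subseteq\Qa$ below $p$ with $r\perp_{\Qb}d$ for all $d\in D_r$ — is dense below $p$ in $\Qb$ (the dense subsets of $\Qa$ form a set in $V$, so the witnessing name can be taken to be a check name).

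Next I would use this to build a strategy $\sigma$ for player~I in $\Gamma_\alpha$. Player~I opens with $p$; when player~II has played $q_n\le_{\Qa}p$ below I's last move, player~I picks a bad $r_n\le_{\Qb}q_n$ with witness $D_n$, forms the projection $\bar r_n=\{y\cap\bigcup q_n\mid y\in r_n\}$ (which lies in $\Qa$ with $r_n\le_{\Qb}\bar r_n\le_{\Qa}q_n$), uses density of $D_n$ below $p$ to choose $d_n\in D_n$ with $d_n\le_{\Qa}\bar r_n$, and plays $d_n$ as the next move; along the way I would record the finite sequences of the $r_i$'s, $\bar r_i$'s and $d_i$'s into a tree exactly as in the proof of Proposition~\ref{prop:Usuba03}. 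Since $\Qa$ is precipitous, Theorem~\ref{thm:Burke} says $\sigma$ is not winning for player~I, so there are a play $\seq{p_n,q_n\mid n<\om}$ consistent with $\sigma$ and an $x\in[V_\alpha]^\om$ with $x\cap\bigcup q_n\in q_n$ (hence $x\cap\bigcup d_n\in d_n$) for all $n$. Because $d_n\le_{\Qa}\bar r_n$ one gets $y_n\in r_n$ with $y_n\cap\bigcup q_n=x\cap\bigcup q_n$, and the aim is then to splice $y_n$ with $x\cap\bigcup d_n$ into an honest $\Qb$-condition below both $r_n$ and $d_n\in D_n$, contradicting $r_n\perp_{\Qb}d_n$ and hence showing $\sigma$ was winning after all.

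The main obstacle is precisely this splicing step. A thread $x$ produced by player~II lives in $V_\alpha$, whereas a bad condition cannot lie in $\Qa$ and so has support reaching above $V_\alpha$; thus $\sigma$ must be designed to carry along, at each stage, enough of $r_n$ — for instance by also committing player~I's moves to agree with $r_n$ on $V_\alpha$ — so that $x$ can be lifted back into $\Qb$. Worse, a single thread only produces a single point of the would-be lower bound $r_n\wedge d_n$, not a stationary set, so one must arrange that the conditions and dense sets brought into play are of a self-reflecting form (e.g.\ built from countable elementary submodels, which is natural since $\Qb$ is a tower of $[\,\cdot\,]^\om$-stationary sets) for which meeting a single threading point suffices, or else iterate along the recorded tree and extract an infinite descending branch in the spirit of Proposition~\ref{prop:Usuba03}. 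Getting this bookkeeping right is where the real work lies.
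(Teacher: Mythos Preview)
Your reduction is correct, but the splicing step is a genuine gap, and the contradiction route does not close in the way you outline. The bad condition $r_n$ has support $\bigcup r_n\in V_\beta\setminus V_\alpha$, whereas any thread produced by $\Gamma_\alpha$ lives in $V_\alpha$; from $x\cap\bigcup d_n\in d_n$ and $d_n\le\bar r_n$ you only get a single $y_n\in r_n$ with $y_n\cap\bigcup q_n=x\cap\bigcup q_n$, and there is no reason $y_n\cap\bigcup d_n\in d_n$. Even if there were, one point does not refute $r_n\perp d_n$, which is a non-stationarity assertion. The ``self-reflecting'' patch you mention is not a patch but the whole argument, and once you pursue it you are no longer arguing by contradiction.

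The paper instead proves the proposition constructively via Lemma~\ref{lem:Usuba04}: for $p\in\Qa$, the set
\[
p^\ast=\bigl\{x\in[V_{\alpha+1}]^\om\;\bigm|\;x\cap\textstyle\bigcup p\in p,\ \forall D\in x\ (D\subseteq\Qa\text{ dense}\Rightarrow\exists r\in D\cap x\ (x\cap\textstyle\bigcup r\in r))\bigr\}
\]
is stationary in $[V_{\alpha+1}]^\om$, hence a condition of $\Qb$ below $p$, and it visibly forces $\dot G\cap\Qa$ to meet every dense $D\in V_{\alpha+1}$. To prove stationarity one fixes a Skolem function on $V_{\alpha+1}$ and describes a strategy for player~I in $\Gamma_\alpha$: at stage $n$, using a fixed bookkeeping surjection $\pi:\om\to\om\times\om$, player~I selects (by pressing down) a dense set $D_n$ that is the $j$-th dense set in the Skolem hull of the current thread-approximation at level $i$ (where $\pi(n)=\langle i,j\rangle$), strengthens into $D_n$, and then further strengthens so that the next condition records this witness and is closed under the Skolem function. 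Since $\Qa$ is precipitous this strategy is defeated; the Skolem hull $Sk(x)$ of the resulting thread lies in $p^\ast$ and is closed under the given function. Thus the ``elementary submodel'' idea you gesture at is used not to repair an incompatibility argument but to name $p^\ast$ outright; Burke's game is then played to show $p^\ast$ is stationary, not to derive a contradiction.
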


This proposition is an immediate consequence of the next
lemma. The referee informed us that this lemma is a  ``tower" analogue of Proposition 2.4
of 
\mycite{Ketchersid-Larson-Zapletal}.

\begin{lemma}\label{lem:Usuba04}
Let $\alpha$ be an uncountable  limit ordinal,
and suppose $\bbQ_\alpha$ is precipitous.
Then, for every $p \in \bbQ_\alpha$,
the set $\{x \in [V_{\alpha+1}]^\om \mid x \cap \bigcup p \in p,
\forall D \in x\,($if $D \subseteq \bbQ_{\alpha}$ is dense, then $\exists r \in D \cap x \,(x \cap \bigcup r \in r))\}$
is stationary in $[V_{\alpha+1}]^\om$.
\end{lemma}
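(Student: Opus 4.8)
The plan is to use Burke's characterization (Theorem~\ref{thm:Burke}): since $\bbQ_\alpha$ is precipitous, player I has no winning strategy in $\Gamma_\alpha$. Fix $p \in \bbQ_\alpha$ and suppose toward a contradiction that the set $S$ in the statement is \emph{not} stationary in $[V_{\alpha+1}]^\om$; that is, there is a club $C \subseteq [V_{\alpha+1}]^\om$ (equivalently, an algebra structure or a regressive/closure function witnessing non-stationarity) disjoint from $S$. First I would reformulate membership in $S$: $x \in S$ iff $x \cap \bigcup p \in p$ and for every dense $D \subseteq \bbQ_\alpha$ with $D \in x$ there is $r \in D \cap x$ with $x \cap \bigcup r \in r$. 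Restricting attention to countable $x$ that contain $p$ and are sufficiently closed (which is a club condition, hence still disjoint from $S$), the failure of the second clause gives, for each such $x \notin S$, a ``bad'' dense set $D_x \in x$ — the obstruction we will feed into a strategy for I.

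Next I would build a strategy $\sigma$ for player I in $\Gamma_\alpha$ and derive a contradiction with precipitousness. The idea: $\sigma$ plays $p_0 = p$ (shrunk, if necessary, to the stationary set of sufficiently closed $x$ with $x \cap \bigcup p \in p$); and after II has played $q_0 \ge q_1 \ge \dots \ge q_{n-1}$, player~I uses the ``bad'' dense sets coded by the $x$'s in the current condition together with a bookkeeping enumeration of all dense subsets of $\bbQ_\alpha$ appearing, to play a $p_n$ that refines $q_{n-1}$ and \emph{meets} the next dense set on the list in a controlled way — more precisely, so that any $x$ that stays in all the $p_n$'s would end up, after $\om$ steps, being a countable set that is closed, contains $p$, meets every dense set $D$ with $D \in x$ via some $r \in D \cap x$ with $x \cap \bigcup r \in r$, i.e.\ lies in $S$ — contradicting $C \cap S = \emptyset$ since such $x$ is in $C$. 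Thus II cannot win against $\sigma$, so $\sigma$ is winning for I, contradicting Theorem~\ref{thm:Burke}.

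Concretely, the bookkeeping works as follows. Work below a fixed $\theta$ large enough and identify the relevant dense subsets of $\bbQ_\alpha$ with elements of $V_{\alpha+1}$ (using that $\bbQ_\alpha \subseteq V_\alpha$). At stage $n$, player~I looks at $q_{n-1}$, which is a stationary set of countable sets $x$; by the hypothesis each such $x$ carries a dense set $D_x \in x$ witnessing $x \notin S$ (after the club reduction). Using a pressing-down / Fodor-style argument on $[V_{\alpha+1}]^\om$, player~I finds a stationary $q_n \le q_{n-1}$ on which this witnessing dense set is constant, say equal to $D^{(n)}$, and such that every $x \in q_n$ satisfies $D^{(n)} \in x$ but $x \cap \bigcup r \notin r$ for all $r \in D^{(n)} \cap x$. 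If II nonetheless produces, at the end, some $x$ with $x \cap \bigcup p_n \in p_n$ for all $n$, then since $x$ is forced to contain each $D^{(n)}$ and to be closed, and since the $D^{(n)}$'s were chosen to diagonalize against every dense set that $x$ could contain (here one arranges the enumeration so that $D_x$ for the limiting $x$ appears as some $D^{(n)}$), we reach the contradiction that $x$ fails the $S$-condition at $D^{(n)}$ while $x$ should satisfy it.

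The main obstacle is the bookkeeping/diagonalization: player~I must, in $\om$ steps, anticipate which dense set the (not-yet-determined) limiting $x$ will carry as its witness $D_x$, and arrange that this $D_x$ is hit by the construction. The standard device is a reflection/Löwenheim–Skolem argument: pass to a countable elementary submodel and use that any countable $x \ni p$ that is closed under the relevant Skolem functions for $\seq{V_\theta,\in,\dots}$ will have $D_x$ among a set of dense sets that player~I has already enumerated and handled; alternatively, one uses that there are only ``locally countably many'' relevant dense sets once one fixes the model. Making this precise — ensuring $q_n$ stays stationary at each step (this uses that $\bbQ_\alpha$-conditions are stationary and that shrinking along a pressing-down function preserves stationarity) and that the resulting $\sigma$ genuinely defeats every II-play — is the technical heart; everything else is routine manipulation of stationary sets in $[V_{\alpha+1}]^\om$.
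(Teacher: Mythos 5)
Your overall architecture coincides with the paper's: run Burke's game, have player I play a strategy designed so that any play defeating it produces a member of the target set, and invoke precipitousness to conclude that such a defeating play exists. But there is a genuine gap, and it sits exactly where you place ``the technical heart.'' First, the concrete mechanism you do describe does not typecheck: at stage $n$ the elements $x$ of $q_{n-1}$ are countable subsets of $\bigcup q_{n-1}\in V_\alpha$, whereas a dense subset of $\bbQ_\alpha$ contains conditions of unboundedly high rank below $\alpha$ and hence lies in $V_{\alpha+1}\setminus V_\alpha$, so it cannot be an \emph{element} of such an $x$; moreover your ``bad'' witness $D_x$ is attached to the limiting set of the whole play, which is not determined at any finite stage, so there is nothing to press down on. Your polarity is also backwards there: arranging ``$x\cap\bigcup r\notin r$ for all $r\in D^{(n)}\cap x$'' is the opposite of what the strategy must do, namely extend \emph{into} the stabilized dense set. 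Second, the witness to stationarity cannot be the limiting $x$ itself, which is a subset of $V_\alpha$ and contains no dense sets as elements; it must be its Skolem hull in $V_{\alpha+1}$, and one has to arrange during the construction that this hull traces back correctly onto each $\bigcup p_n$.

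The paper's proof supplies precisely the missing diagonalization. Fix an arbitrary $f:[V_{\alpha+1}]^{<\om}\to V_{\alpha+1}$ (so stationarity is proved directly; no club and no contradiction are needed), let $Sk(y)$ be the hull of $y$ in $\seq{V_{\alpha+1},\in,\Delta,f}$, and for each countable $y$ fix an enumeration $\seq{D^y_j\mid j<\om}$ of \emph{all} dense subsets of $\bbQ_\alpha$ lying in $Sk(y)$ --- not a single bad one. With a pairing $\pi(n)=\seq{i,j}$, $i\le n$, player I at stage $n$ presses down on the current condition to stabilize $D^{x\cap\bigcup p_i'}_j$ as a single dense set $D_n$, extends to some $p_n''\in D_n$, and then shrinks to $p_n$ so that every $x\in p_n$ satisfies $p_n''\in x$ and $Sk(x)\cap\bigcup p_n=x$. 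For a surviving play one gets $Sk(x)=\bigcup_n Sk(x\cap\bigcup p_n')$ and $Sk(x)\cap\bigcup p_n''=x\cap\bigcup p_n''\in p_n''$, whence every dense $D\in Sk(x)$ equals some $D_n$ and is met by $p_n''\in D\cap Sk(x)$; since $Sk(x)$ is closed under $f$, this establishes stationarity. Until you carry out this bookkeeping (or an equivalent one), the proposal is an outline of the statement rather than a proof of it.
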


\begin{proof}
Take $p \in \bbQ_\alpha$.
Fix a well-ordering $\Delta$ on $V_{\alpha+1}$
and a function $ f: [V_{\alpha+1}]^{<\om} \to V_{\alpha+1}$.
For a set $x \subseteq V_{\alpha+1}$, let $Sk(x)$ denote the Skolem hull of
$x$ under the structure $\seq{V_{\alpha+1}, \in, \Delta, f}$.
We will find a set $x$ such that $Sk(x)$ belongs to the required set.

Now we describe a strategy for player I in $\Gamma_\alpha$.
Fix a surjection $\pi:\om \to \om \times \om$
such that if $\pi(n)=\seq{i,j}$ then $i \le n$.
For a countable $x \subseteq V_{\alpha+1}$, 
we also fix an enumeration $\seq{D^x_j \mid j<\om}$
of all dense subsets of $\bbQ_\alpha$ that belong to $Sk(x)$.

First, let $p'_0 = p$.
Take $j<\om$ with $\pi(0)=\seq{0,j}$.
Since $p_0'$ is stationary in $[\bigcup p'_0]^\om$ ,
there is some dense set $D_0 \subseteq \bbQ_\alpha$
with $\{x \in p_0' \mid D_0=D^x_j\}$ stationary in $[\bigcup p'_0]^\om$ ,
so
$\{x \in p_0' \mid D_0=D^x_j\}$ is a condition of $\bbQ_\alpha$.
 Take $p_0'' \in D_0$ with
$p_0'' \le \{x \in p_0' \mid D_0=D^x_j\}$.
Then take $p_0 \le p_0''$ such that
$p_0'' \in x$ and $Sk(x) \cap \bigcup p_0=x$
for every $x \in p_0$.
This $p_0$ is the first move of player I.

Let $n>\om$,
and suppose
$p \ge p_0' \ge p_0'' \ge p_0 \ge q_0 \ge \cdots \ge q_{n-1}$ were chosen so that,
for every $k <n$:
\begin{enumerate}
\item If $\pi(k)=\seq{i,j}$, 
then, for every $x \in p_k''$,
we have $D_{j}^{x \cap \bigcup p_i'}=D_k$.
\item $p_k'' \in D_k$.
\item $Sk(x) \cap \bigcup p_k=x$ and $p_k'' \in x$ for every $x \in p_k$.
\end{enumerate}
As in the $n=0$ case,
we can choose $p_n \le p_n'' \le p_n' \le q_{n-1}$ as required,
and player I chooses $p_n$ as his move.
This completes the description of our strategy for player I.

Since $\bbQ_\alpha$ is precipitous,
the strategy above is not a winning strategy for player I.
Thus, we can find a descending sequence
$p \ge p_0' \ge p_0'' \ge p_0 \ge q_0 \ge \cdots \ge q_{n-1} \ge p_n' \ge \cdots$
such that each $p_n, p_n', p_n''$  is chosen according to the above strategy,
but there is some $x \subseteq V_\alpha$
such that $x \cap \bigcup p_n \in p_n$ for every $n<\om$.
We may assume that $x=\bigcup_{n<\om}( x \cap \bigcup p_n)$.
Since $p'_n \ge p_n'' \ge p_n \ge p'_{n+1}$,
we also have
$x \cap \bigcup p_n' \in p_n'$, $x \cap \bigcup p_n'' \in p_n''$,
and $x=\bigcup_{n<\om} (x \cap \bigcup p'_n)=
\bigcup_{n<\om} (x \cap \bigcup p''_n)$.
Hence $Sk(x)=\bigcup_{n<\om} Sk(x \cap \bigcup p'_n)$.
For $n<m$, we have $Sk(x \cap \bigcup p_m) \cap \bigcup p_n''=(x \cap \bigcup p_m) \cap \bigcup p_n''=x \cap \bigcup p_n''$,
thus $Sk(x) \cap \bigcup p_n''=x \cap \bigcup p_n''$ for every $n<\om$.
We show that, if $D \in Sk(x)$ is dense in $\bbQ_\alpha$,
then there is $r \in D \cap Sk(x)$ with $Sk(x) \cap \bigcup r \in r$,
which completes our proof.
Take a dense set $D \in Sk(x)$.
Since $ Sk(x)=\bigcup_{n<\om} Sk(x \cap \bigcup p'_n)$,
there is some $i<\om$
with $D \in Sk(x \cap \bigcup p_i')$.
Take $j<\om$ with $D=D^{x \cap \bigcup p_i'}_j$,
and pick $n<\om$ with $\pi(n)=\seq{i,j}$.
Then, since $x \cap \bigcup p_n'' \in p_n''$,
we have that $p_n'' \in D_n=D^{x \cap \bigcup p_n'}_j=D$.
$p_n'' \in x \cap \bigcup p_n \subseteq x$,
so $p_n'' \in D \cap Sk(x)$ and $Sk(x) \cap \bigcup p_n''=x \cap \bigcup p_n'' \in p_n''$.
\qed
\end{proof}

Using the above proposition we can obtain the next result
concerning Question~1.

\begin{proposition}\label{prop:katakana-a}
  Suppose $\lambda$ is an inaccessible cardinal.  If $\Ql$ is presaturated and
  $\comprh{\alpha<\lambda}{\text{$\Qa$ is precipitous}}$ is cofinal in $\lambda$, then every set of
   reals in $\LR$ satisfies the regularity properties.
\end{proposition}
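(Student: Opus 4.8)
The plan is to verify that $\mathbb P=\Ql$ satisfies the hypothesis of Theorem~\ref{thm:Woodin}. Let $G$ be $\Ql$-generic over $V$ and let $j\colon V\to M=\Ultra V G$ be the induced generic elementary embedding; since presaturation implies precipitousness, $M$ is well founded, so $j$ is a genuine generic elementary embedding definable in $V[G]$. Conditions (1) and (2) of Theorem~\ref{thm:Woodin} come for free: for any countable stationary tower one has $\crit(j)=\aleph^V_1$, and, $\Ql$ being presaturated, the remarks on presaturated and on presaturation give $j(\aleph^V_1)=\lambda$ and that $M$ is closed under $\om$-sequences in $V[G]$. So everything reduces to condition (3): in $V[G]$ every real is added by a forcing of size less than $\lambda$.

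Fix a real $x\in V[G]$, say $x=\dot x^{G}$ for a $\Ql$-name $\dot x$. For each $n<\om$ the set of conditions deciding ``$n\in\dot x$'' is dense, so by genericity choose $r_n\in G$ deciding it. Each $r_n$ lies in $V_\lambda$, and since $\lambda$ is regular there is an uncountable limit ordinal $\alpha_0<\lambda$ with $r_n\in V_{\alpha_0}$, hence $r_n\in\bbQ_{\alpha_0}$, for all $n$. Then
\[
  x=\comprh{n<\om}{\exists r\in G\cap\bbQ_{\alpha_0}\ (r\Vdash_{\Ql}\text{``}n\in\dot x\text{''})}.
\]
Here $\supseteq$ holds because $n\in\dot x^G$ forces $r_n$ to decide ``$n\in\dot x$'' positively, and $\subseteq$ is soundness of forcing. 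As $\dot x$ and the restriction of $\Vdash_{\Ql}$ to this formula lie in $V$, the displayed set is computed inside $V[G\cap\bbQ_{\alpha_0}]$, so $x\in V[G\cap\bbQ_{\alpha_0}]$.

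It now suffices to find a precipitous $\alpha$ with $\alpha_0\le\alpha<\lambda$ such that $G\cap\Qa$ is $\Qa$-generic over $V$: then $G\cap\bbQ_{\alpha_0}=(G\cap\Qa)\cap\bbQ_{\alpha_0}\in V[G\cap\Qa]$, so $x\in V[G\cap\Qa]$, and $\size{\Qa}\le\size{V_\alpha}<\lambda$ since $\lambda$ is inaccessible, so $x$ is added by the small forcing $\Qa$ with generic $G\cap\Qa$. I would get such an $\alpha$ by a density argument. For an ordinal $\gamma<\lambda$ put
\[
  D_\gamma=\comprh{q\in\Ql}{\exists\alpha\ \bigl(\gamma\le\alpha<\lambda,\ \Qa\text{ precipitous},\ q\Vdash_{\Ql}\text{``}\dot G\cap\Qa\text{ is }\Qa\text{-generic over }V\text{''}\bigr)}.
\]
Given $q_1\in\Ql$, cofinality of the set of precipitous $\alpha<\lambda$ lets us pick a precipitous $\alpha$ above $\gamma$ and above $\mathrm{rank}(q_1)$; then $q_1\in V_\alpha$, so $q_1\in\Qa$, and applying the proposition preceding Lemma~\ref{lem:Usuba04} to the pair $\alpha<\lambda$ and the condition $q_1\in\Qa$ yields $q\le q_1$ in $\Ql$ forcing ``$\dot G\cap\Qa$ is $\Qa$-generic over $V$''. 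Hence each $D_\gamma$ is dense; since $\alpha_0<\lambda$ lies in $V$, $G$ meets $D_{\alpha_0}$, producing the required $\alpha$. This verifies condition (3), and Theorem~\ref{thm:Woodin} then gives the conclusion.

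The substantive input is the proposition preceding Lemma~\ref{lem:Usuba04}, which rests on Burke's game characterization and on Lemma~\ref{lem:Usuba04}; granting it, the only delicate point is the bookkeeping, namely the passage from ``below every $p\in\Qa$ there is a condition forcing genericity of $\dot G\cap\Qa$'' to ``the conditions forcing this genericity are dense in $\Ql$''. This works precisely because $\lambda$ is a limit ordinal, so every $\Ql$-condition lands in $\Qa$ once $\alpha$ is taken past its rank, together with the cofinality hypothesis on precipitous $\alpha$. The other thing to get right is the order of the two halves — one must first capture $x$ at a bounded level $\alpha_0$ and only then secure genericity at some $\alpha\ge\alpha_0$, transferring $x$ via $V[G\cap\bbQ_{\alpha_0}]\subseteq V[G\cap\Qa]$ — since ``$V[G\cap\Qa]$'' is a generic extension only after we know $G\cap\Qa$ is $\Qa$-generic. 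I expect these to be the main points; the remainder is assembling facts already available.
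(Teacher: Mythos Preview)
Your argument is correct and follows the paper's strategy: verify the hypotheses of Theorem~\ref{thm:Woodin}, using the proposition preceding Lemma~\ref{lem:Usuba04} to make the set of $\alpha<\lambda$ with $G\cap\Qa$ $\Qa$-generic cofinal in $\lambda$. The only substantive variation is in checking condition~(3): the paper uses closure of $M$ under $\omega$-sequences to get $r\in M$, writes $r=[f]$ for some $f\in V$, and then picks $\alpha$ with $\dom(f)\in\Qa$ and $G\cap\Qa$ generic; you instead choose conditions $r_n\in G$ deciding each bit of $x$ and bound their ranks. Both routes work.

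One small correction: when you bound $\sup_n\mathrm{rank}(r_n)$ below $\lambda$ by citing ``$\lambda$ is regular,'' you are reasoning inside $V[G]$ about the $V[G]$-sequence $\langle r_n\rangle$, so regularity of $\lambda$ in $V$ is not what you need. The correct justification is $\cf^{V[G]}(\lambda)>\omega$, which is precisely what presaturation gives (see the Remark after the definition of presaturation). With that fix the proof goes through.
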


\begin{proof}
  For each $\gamma < \lambda$, we let $E_\gamma$ denote the set of all of  $p \in \bbQ_\lambda$ such that 
  $p$ forces ``$\exists \alpha  (\gamma < \alpha < \lambda  \land \text{$G\cap\Qa$ is $\Qa$-generic} )$ where  $G$ is $\Ql$-generic''.
  We note that Proposition 2 implies that, for each $\gamma <\lambda$, the set $E_{\gamma}$ is dense in $\Ql$.
  Let $G$ be a $\Ql$-generic filter over $V$.  We work in $V[G]$.
  From the denseness of $E_\gamma$ for all $\gamma < \lambda$, 
  it is easy to see that
  $\comprh{\alpha<\lambda}{\text{$G\cap\Qa$ is $\Qa$-generic}}$ is cofinal.

  If $j:V\to M$ is the corresponding generic elementary embedding,
  then conditions (1) and (2) of Woodin's theorem are satisfied
  since $\Ql$ is presaturated.  Now we check that condition (3) is also
  satisfied.
  
  Suppose $r$ is a real in $V[G]$.  By presaturation of $\Ql$ we know
  that $r$ belongs to $M$, the generic ultrapower.  Hence there is some
  function $f$ in the ground model with $\dom(f)\in G$ such that
  $[f]=r$.  Choose large enough $\alpha<\lambda$ such that $\dom(f)\in\Qa$
  and $G\cap\Qa$ is $\Qa$-generic.  This would imply that $r\in V[G\cap\Qa]$.
  Hence the conditions of Woodin's theorem are satisfied by our
  generic elementary embedding. \qed 
\end{proof}

The stationarity of $\comprh{\alpha<\lambda}{\text{$\Qa$ is precipitous}}$ has some
interesting consequences.

\begin{lemma}\label{lem:Usuba06}
Suppose $\la$ is inaccessible.
If the set $\{\alpha<\la \mid \bbQ_\alpha$ is precipitous$\}$
is stationary in $\la$,
then $\bbQ_\la$ is presaturated.
\end{lemma}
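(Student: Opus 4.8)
The plan is to use the characterization recorded in the Remark, that presaturation of $\mathbb{Q}_\lambda$ is equivalent to $\mathbb{Q}_\lambda \Vdash \cf(\lambda) > \omega$ (this equivalence also subsumes precipitousness). So it suffices to rule out, first, that player I has a winning strategy in Burke's game $\Gamma_\lambda$ — equivalently, by Theorem~\ref{thm:Burke}, that $\mathbb{Q}_\lambda$ fails to be precipitous — and then that some condition forces $\cf(\lambda) = \omega$. The engine for both is a reflection argument that pushes any ``bad configuration'' for $\mathbb{Q}_\lambda$ down to a $\mathbb{Q}_\alpha$ with $\alpha$ in the stationary set $S := \{\alpha < \lambda : \mathbb{Q}_\alpha \text{ is precipitous}\}$.

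For the reflection: fix a large regular $\theta$ and a finite set $P$ of parameters to be named below. Since $\lambda$ is inaccessible, $|V_\beta| < \lambda$ for all $\beta < \lambda$, so the operation sending $\beta < \lambda$ to the Skolem hull in $(H_\theta,\in)$ of $V_\beta \cup \omega_1 \cup \{\lambda\} \cup P$, iterated $\omega$ times while closing under $\gamma \mapsto V_\gamma$, produces an elementary submodel $N_\beta \prec H_\theta$ of size $< \lambda$ with $V_\beta \subseteq N_\beta$; the map $\beta \mapsto N_\beta \cap \lambda$ is continuous and increasing, so the set $D$ of its fixed points is a club in $\lambda$. For $\alpha \in D$, writing $N := N_\alpha$, we have $N \cap \lambda = \alpha$ and $V_\alpha \subseteq N$, whence $N \cap V_\lambda = V_\alpha$, and therefore $\mathbb{Q}_\lambda \cap N = \mathbb{Q}_\alpha$, with matching orderings. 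Since $S$ is stationary I may pick $\alpha \in D \cap S$, so $\mathbb{Q}_\alpha$ is precipitous.

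Precipitousness of $\mathbb{Q}_\lambda$: suppose player I had a winning strategy $\sigma$ in $\Gamma_\lambda$; take $\sigma \in P$ and pick $\alpha \in D \cap S$ with $N = N_\alpha$ as above. Then $\sigma$ restricted to finite sequences from $\mathbb{Q}_\alpha$ is a strategy for player I in $\Gamma_\alpha$: if $q_0,\dots,q_{n-1} \in \mathbb{Q}_\alpha = \mathbb{Q}_\lambda \cap N$, the tuple lies in $N$, so $\sigma(q_0,\dots,q_{n-1})$ — definable in $H_\theta$ from $\sigma$ and the tuple, and a legal $\mathbb{Q}_\lambda$-move — again lies in $\mathbb{Q}_\lambda \cap N = \mathbb{Q}_\alpha$. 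Moreover it is still winning: a play of $\Gamma_\alpha$ following it is literally a play of $\Gamma_\lambda$ following $\sigma$, and any witness $x \in [V_\alpha]^\omega$ for player II at level $\alpha$ is also a witness $x \in [V_\lambda]^\omega$ for player II at level $\lambda$, contradicting that $\sigma$ wins $\Gamma_\lambda$. So player I wins $\Gamma_\alpha$, contradicting (Theorem~\ref{thm:Burke}) precipitousness of $\mathbb{Q}_\alpha$. Hence $\mathbb{Q}_\lambda$ is precipitous.

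The remaining step — and where the main work lies — is to show $\mathbb{Q}_\lambda \Vdash \cf(\lambda) > \omega$. Suppose $p \Vdash_{\mathbb{Q}_\lambda} \dot f : \omega \to \lambda$ is cofinal; put $p,\dot f \in P$ and fix $\alpha \in D \cap S$. Since $\mathbb{Q}_\lambda \cap N_\alpha = \mathbb{Q}_\alpha$ and $N_\alpha \prec H_\theta$, any $\mathbb{Q}_\alpha$-generic $H \ni p$ over $V$ meets every set $D' \cap N_\alpha$ with $D' \in N_\alpha$ dense in $\mathbb{Q}_\lambda$; a routine computation then shows that the evaluation $\dot f^H$ is a well-defined $\omega$-sequence whose values lie in $N_\alpha \cap \lambda = \alpha$ (each value is definable from a member of $H \cap N_\alpha$ together with $\dot f,n \in N_\alpha$) and is cofinal in $\alpha$ (it meets the dense sets $\{r \le p : \exists n\ r \Vdash \dot f(n) > \xi\} \cap N_\alpha$ for $\xi < \alpha$); thus $\mathbb{Q}_\alpha \Vdash \cf(\alpha) = \omega$. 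The obstacle is that this does not by itself contradict precipitousness of $\mathbb{Q}_\alpha$ — a precipitous countable stationary tower need not be presaturated — and one cannot simply arrange $\cf(\alpha) > \omega$, since $S$ need not meet the (stationary but non-club) set of ordinals of uncountable cofinality inside $D$. I expect this is resolved either by a self-strengthening of the reflection that extracts from $(p,\dot f)$ a genuine winning strategy for player I in $\Gamma_\alpha$ for $\alpha \in D \cap S$ — player I builds a descending sequence in $\mathbb{Q}_\alpha$ deciding $\dot f$ cofinally in $\alpha$ (now permissible, since player I's strategy may use a cofinal $\omega$-sequence in $\alpha$ from $V$), together with the Skolem-hull bookkeeping of Lemma~\ref{lem:Usuba04} and a well-founded ``tree of attempts'' as in Proposition~\ref{prop:Usuba03}, so that a witness for player II would yield an infinite descending branch — or by analysing the generic ultrapower: precipitousness of $\mathbb{Q}_\lambda$ forces $\Ult(V;\dot G)$ to be transitive with $j(\aleph_1^V) = \lambda$ (since each $j``\gamma$ with $\gamma < \lambda$ is countable in the ultrapower while $\lambda$ is not), so the ultrapower believes $\lambda$ is regular, which must then be shown incompatible with $\cf(\lambda)^{V[\dot G]} = \omega$. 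Pinning down this last contradiction is the heart of the argument; the rest is the reflection bookkeeping above, and it combines with the computation of this paragraph to contradict that $S$ is stationary.
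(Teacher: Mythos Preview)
Your reflection argument for precipitousness of $\mathbb{Q}_\lambda$ is fine, but the main step --- ruling out a condition forcing $\cf(\lambda)=\omega$ --- is, as you yourself say, left open. That is the whole content of the lemma, so this is a genuine gap, not a detail.

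Neither of the two completions you suggest works as stated. The second is circular: even granting $j(\aleph_1^V)=\lambda$ (which the paper derives from presaturation, not from precipitousness), the generic ultrapower $M$ believing ``$\lambda$ is regular'' is perfectly compatible with $\cf(\lambda)^{V[G]}=\omega$ \emph{unless} $M$ is closed under $\omega$-sequences in $V[G]$ --- and that closure is precisely presaturation, the thing you are trying to prove. The first suggestion is too vague to assess: player I's moves in $\Gamma_\alpha$ are just conditions in $\mathbb{Q}_\alpha$, and a descending sequence deciding $\dot f$ cofinally in $\alpha$ does not prevent player II from producing a witness $x$; precipitousness of $\mathbb{Q}_\alpha$ says exactly that such an $x$ exists along some play. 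Your own reflection already shows that below $p$, $\mathbb{Q}_\alpha$ forces $\cf(\alpha)=\omega$, and you correctly note that this is not a contradiction --- so something more is needed, and it is not supplied.

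The paper's proof avoids the cofinality characterization entirely and verifies the definition of presaturation directly. Given $p$ and names $\dot f_n$ ($n<\omega$) for ground-model functions, it fixes maximal antichains $A_n$ deciding $\dot f_n$, reflects to find $\alpha$ with $\mathbb{Q}_\alpha$ precipitous and each $A_n\cap\mathbb{Q}_\alpha$ predense in $\mathbb{Q}_\alpha$, and then invokes Lemma~\ref{lem:Usuba04}: that lemma produces a stationary $p'\le p$ in $[V_{\alpha+1}]^\omega$ whose elements $x$ each ``capture'' a condition from every $D_n$. One then reads off, from each $x\in p'$, the functions $f_n^{q^x_n}$ and defines $g(x)=\{f_n^{q^x_n}(x\cap\bigcup q^x_n):n<\omega\}$; this single ground-model function satisfies $p'\Vdash[g]=\{[\dot f_n]:n<\omega\}$. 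The point you were missing is that Lemma~\ref{lem:Usuba04} turns precipitousness of $\mathbb{Q}_\alpha$ into the existence of a concrete stationary set of ``$\mathbb{Q}_\alpha$-generic'' countable models, and this stationary set is itself a condition in $\mathbb{Q}_\lambda$ below $p$ on which the desired $\omega$-sequence is represented.
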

\begin{proof}
Let $p \in \bbQ_\la$, and
take $\bbQ_\la$-names $\dot f_n$ ($n<\om$) such that
$p \Vdash_{\bbQ_\la}$``$\dot f_n \in V$ is a function whose domain is some element of  $\Ql$''.
For each $n<\om$, take a maximal antichain $A_n \subseteq \bbQ_\la$
such that, for each $q \in A_n$, either $q \bot p$ or $q \le p$, and, if
$q \le p$, then there is some function $f^q_n$ with $q \Vdash_{\bbQ_\la}$``$\dot f_n=f^q_n$''.
Since the set $\{\alpha<\la \mid \bbQ_\alpha$ is precipitous$\}$ is stationary in $\la$,
there is $\alpha<\la$ such that
$\bbQ_\alpha$ is precipitous, and, for every $n<\om$,
$A_n \cap \bbQ_\alpha$ is predense in $\bbQ_\alpha$, and
$\dom(f^q_n) \in V_\alpha$ for every $q \in A_n \cap V_\alpha$. 
Let $D_n=\{r \in \bbQ_\alpha \mid  \exists q \in A_n \cap \bbQ_\alpha \,(r \le q)\}$.
$D_n$ is dense in $\bbQ_\alpha$.

By the previous lemma,
the set $p'=\{x \in [V_{\alpha+1}]^\om \mid
x \cap \bigcup p \in p,
\forall n<\om
\exists r \in D_n \cap x \,(x \cap \bigcup r \in r)\}$
is stationary in $[V_{\alpha+1}]^\om$.
We know $p' \le p$.
For $x \in p'$ and $n<\om$, take $r^x_n \in D_n \cap x$
with $x \cap \bigcup r^x_n \in r^x_n$,
and we may assume that
there is a unique $q^x_n \in A_n \cap x$ with $r^x_n \le q^x_n$.
Then define $g:[V_{\alpha+1}]^\om \to V$ by
$g(x)=\{f_n^{q^x_n}(x \cap \bigcup q^x_n) \mid n<\om\}$.
It is routine to check that
$p' \Vdash_{\bbQ_\la}$``$[g]$ represents the set $\{[\dot f_n] \mid n<\om\}$''.
\qed
\end{proof}

We use the last lemma to obtain some consequences of
precipitousness of $\Ql$ where $\lambda$ is a weakly compact cardinal.

\begin{proposition}\label{prop:katakana-i}
Suppose $\la$ is a weakly compact cardinal.
If $\bbQ_\la$ is precipitous,
then $\{\alpha<\la \mid \bbQ_\alpha$ is precipitous$\}$ is stationary in $\la$.
Hence , under our hypothesis, $\bbQ_\la$ is presaturated.
\end{proposition}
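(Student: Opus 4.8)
The plan is to argue by contradiction, using the embedding characterization of weak compactness together with Burke's game (Theorem~\ref{thm:Burke}). Suppose the set $S=\{\alpha<\la\mid\bbQ_\alpha\text{ is precipitous}\}$ is not stationary, and fix a club $C\subseteq\la$ disjoint from $S$. Intersecting $C$ with the club of uncountable limit ordinals below $\la$ (which is a club since $\la$ is inaccessible, hence $>\om_1$), we may assume every $\alpha\in C$ is an uncountable limit ordinal. Then $\bbQ_\alpha$ is not precipitous for each $\alpha\in C$, so by Theorem~\ref{thm:Burke} player I has a winning strategy $\sigma_\alpha$ in $\Gamma_\alpha$; using the axiom of choice, fix the sequence $\vec\sigma=\seq{\sigma_\alpha\mid\alpha\in C}$.

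Next I would form a $\la$-model $M$ in the usual sense: a transitive set of size $\la$ with $\la\in M$, $V_\la\subseteq M$, $M^{<\la}\subseteq M$, $M\models\mathrm{ZFC}^-$, and $C,\vec\sigma\in M$. Since $\la$ is inaccessible (so $|V_\la|=\la$) this is routine: take an elementary submodel of $H(\la^+)$ of size $\la$ containing $V_\la\cup\{\la,C,\vec\sigma\}$ and closed under ${<}\la$-sequences, then pass to its transitive collapse, noting that $\la$, $V_\la$, $C$ and $\vec\sigma$ (the latter two being subsets of $V_\la$, identifying functions with their graphs) are not moved by the collapse. By weak compactness of $\la$ there are a transitive $N$ and an elementary embedding $j:M\to N$ with $\crit(j)=\la$; as usual $V_\la\subseteq N$ and $V_\la^N=V_\la$.

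The crux is the absoluteness of Burke's game. For an uncountable limit ordinal $\gamma\le\la$ and a transitive model $W$ with $V_\gamma\subseteq W$, every play $\seq{p_n,q_n\mid n<\om}$ of $\Gamma_\gamma$ and every candidate witness $x\in[V_\gamma]^\om$ is, as a set, of rank below $\la$ — for $\gamma=\la$ this is where the regularity of $\la$ is used, to bound the ranks of the relevant $\om$-sequences — so it lies in $V_\la=V_\la^M=V_\la^N$; moreover ``$p$ is stationary in $[\bigcup p]^\om$'' is absolute between $V_\la$ and $V$. Hence ``$\tau$ is a winning strategy for player I in $\Gamma_\gamma$'' is absolute between each of $M$, $N$ and $V$ for such $\gamma$. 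In particular $M$ satisfies ``$C$ is club in $\la$ and, for every uncountable limit ordinal $\alpha\in C$, $\vec\sigma(\alpha)$ is a winning strategy for I in $\Gamma_\alpha$''. Applying $j$: since $\crit(j)=\la$ we have $j(C)\cap\la=C$, which is unbounded in $\la<j(\la)$, and $j(C)$ is closed in $N$, so $\la\in j(C)$; therefore $N$ satisfies ``$\tau$ is a winning strategy for I in $\Gamma_\la$'', where $\tau:=j(\vec\sigma)(\la)$. By the absoluteness just noted, $\tau$ really is a winning strategy for player I in $\Gamma_\la$, so by Theorem~\ref{thm:Burke} $\bbQ_\la$ is not precipitous, contradicting the hypothesis. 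Thus $S$ is stationary, and then $\bbQ_\la$ is presaturated by Lemma~\ref{lem:Usuba06}.

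I expect the main obstacle to be the absoluteness bookkeeping in the third paragraph: one must check carefully that $M$ and $N$ compute Burke's games exactly as $V$ does — that they agree on which conditions are stationary, on the set of legal plays, and on the existence of a play won by player II — which rests on $V_\la\subseteq M\cap N$, on $V_\la^N=V_\la$ (a consequence of $\crit(j)=\la$), and on the inaccessibility of $\la$; a secondary point is to confirm that the transitive collapse used to build $M$ fixes $\la$, $C$, and $\vec\sigma$.
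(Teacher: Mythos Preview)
Your argument is correct. The paper, however, takes a much shorter route: it simply observes that ``player I has no winning strategy in $\Gamma_\la$'' is a $\Pi^1_1$ statement $\varphi$ over $\langle V_\la,\in\rangle$ (the outer quantifier is the second-order $\forall\sigma\subseteq V_\la$, and the assertion that some play following $\sigma$ is won by II is first-order over $V_\la$ since plays and witnesses have rank below the inaccessible $\la$), and then invokes $\Pi^1_1$-indescribability of $\la$ to conclude that $\{\alpha<\la\mid V_\alpha\vDash\varphi\}$ is stationary, which gives stationarily many precipitous $\bbQ_\alpha$.

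Your proof via the embedding characterization is essentially the standard derivation of $\Pi^1_1$-indescribability from the existence of $\la$-model embeddings, specialized to this particular $\Pi^1_1$ sentence. What you gain is explicitness: you spell out exactly which absoluteness facts are needed (that $V_\la^M=V_\la^N=V_\la$, that stationarity of $p\in V_\la$ is decided inside $V_\la$, and that $\cf(\la)>\om$ forces all plays and witnesses into $V_\la$), and you isolate precisely where inaccessibility and regularity of $\la$ enter. The paper's proof buys brevity by packaging all of this into the single appeal to indescribability; your proof buys transparency at the cost of length. Both conclude by invoking Lemma~\ref{lem:Usuba06} for presaturation.
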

\begin{proof}
The statement that ``player I does not have a winning strategy for Burke's game $\Gamma_\lambda$''
can be described by some $\mathrm\Pi^1_1$-statement $\varphi$
over $\seq{V_\la, \in}$.
Hence, if $\bbQ_\la$ is precipitous,
then $V_\la \vDash \varphi$,
and, since $\la$ is weakly compact,
we have that $\{\alpha<\la \mid V_\alpha \vDash \varphi\}$
is stationary in $\la$.
Therefore 
the set $\{\alpha<\la \mid \bbQ_\alpha$ is precipitous$\}$ is stationary in $\la$.
\qed
\end{proof}

By combining Proposition~\ref{prop:katakana-a} and Proposition~\ref{prop:katakana-i} we obtain
the next result:

\begin{corollary}
  If $\Ql$ is precipitous for a weakly compact cardinal $\lambda$,
  then every set of reals in $\LR$ satisfies the regularity properties.
\end{corollary}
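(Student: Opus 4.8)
The plan is to simply chain together the two propositions that immediately precede this corollary, so the argument is short. First I would observe that a weakly compact cardinal is in particular inaccessible; this is the only hypothesis of Proposition~\ref{prop:katakana-a} that is not handed to us directly, so recording it up front is worthwhile.

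Next I would invoke Proposition~\ref{prop:katakana-i}. Since $\lambda$ is weakly compact and $\bbQ_\lambda$ is precipitous by assumption, that proposition tells us two things: the set $\{\alpha<\lambda \mid \bbQ_\alpha \text{ is precipitous}\}$ is stationary in $\lambda$, and consequently $\bbQ_\lambda$ is presaturated. A stationary subset of $\lambda$ is in particular cofinal (unbounded) in $\lambda$, so $\comprh{\alpha<\lambda}{\text{$\Qa$ is precipitous}}$ is cofinal in $\lambda$.

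Finally I would apply Proposition~\ref{prop:katakana-a} with this $\lambda$: its three hypotheses are now all verified — $\lambda$ is inaccessible, $\bbQ_\lambda$ is presaturated, and the set of $\alpha<\lambda$ for which $\bbQ_\alpha$ is precipitous is cofinal in $\lambda$ — and its conclusion is precisely that every set of reals in $\LR$ satisfies the regularity properties. That finishes the corollary.

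There is essentially no obstacle here beyond bookkeeping; the substantive work was already done in Lemma~\ref{lem:Usuba04}, Lemma~\ref{lem:Usuba06}, and Propositions~\ref{prop:katakana-a} and \ref{prop:katakana-i} (and, underneath everything, in Woodin's Theorem~\ref{thm:Woodin} and Burke's Theorem~\ref{thm:Burke}). The one point worth a moment's care is the passage from ``stationary in $\lambda$'' to ``cofinal in $\lambda$'', which is immediate since $\lambda$ is regular, but which is the exact form in which the hypothesis of Proposition~\ref{prop:katakana-a} is phrased.
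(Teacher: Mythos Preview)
Your proposal is correct and follows exactly the approach the paper intends: the corollary is stated as an immediate consequence of combining Proposition~\ref{prop:katakana-a} and Proposition~\ref{prop:katakana-i}, and your write-up simply spells out that combination (including the observation that weakly compact implies inaccessible and that stationary implies cofinal).
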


We note that the least Woodin cardinal is not a weakly compact cardinal. A weaker large cardinal could also reflect semistationarity.

\begin{proposition}\label{prop:katakana-u}
  If $\Ql$ is semiprecipitous for a Mahlo cardinal $\lambda$,
  then the set $\comprh{\alpha<\lambda}{\text{$\Qa$ is semiprecipitous where $\alpha$ is regular }}$ is stationary in $\lambda$.
\end{proposition}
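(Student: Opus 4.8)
The plan is to run an argument parallel to the proof of Proposition~\ref{prop:katakana-i}, with ``weakly compact $+$ $\Pi^1_1$-reflection'' replaced by ``Mahlo $+$ first-order reflection $+$ stationarily many regular cardinals''. Since a Mahlo cardinal need not be $\Pi^1_1$-indescribable, the crux is to show that \emph{semiprecipitousness of $\bbQ_\gamma$, for $\gamma$ regular, is captured by a first-order property of the structure $\seq{V_\gamma,\in}$} — and this is exactly why the conclusion only speaks of regular $\alpha$.

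First I would set up a Burke-style characterization of semiprecipitousness at regular heights. If $\gamma$ is regular and uncountable, then $\cf(\gamma)>\om$, so every play $\seq{p_n,q_n\mid n<\om}$ of Burke's game $\Gamma_\gamma$ (and every countable witness $x$) has all of its entries of rank bounded strictly below $\gamma$; moreover a function $f\in V_\gamma$ has bounded rank, so $\mathrm{Ult}(V_\gamma;\dot G)$ is well-founded iff $\mathrm{Ult}(V_\beta;\dot G)$ is well-founded for every $\beta<\gamma$. Using these observations one adapts Burke's proof to show: for regular uncountable $\gamma$, $\bbQ_\gamma$ is semiprecipitous iff player I has no winning strategy in (the ``semiprecipitous'' variant of) $\Gamma_\gamma$. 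Next, passing to the natural closed auxiliary game in which player II also commits to a coherent sequence $x_0,x_1,\dotsc$ and loses only by getting stuck at a finite stage — Gale--Stewart determinacy applies, and by Burke's analysis player I has a winning strategy in $\Gamma_\gamma$ iff player II has no winning strategy in this closed game. But for regular $\gamma$ the statement ``player I can force player II to get stuck within $n$ moves starting from the empty position'' unravels into an alternating block of $\exists$/$\forall$ quantifiers over conditions in $\bbQ_\gamma$ and countable sets, and ``$p\in\bbQ_\gamma$'', ``$q\le_{\bbQ_\gamma}p$'', ``$x\in q$'', ``$x\cap u=y$'' are all first-order over $\seq{V_\gamma,\in}$; conjoining over $n<\om$ yields a single first-order sentence $\theta$ with: $\gamma$ regular $\Rightarrow$ ($\bbQ_\gamma$ semiprecipitous $\iff$ $\seq{V_\gamma,\in}\vDash\theta$). (One may equivalently phrase this level by level via the well-founded trees $T_\sigma$ from the proof of Proposition~\ref{prop:Usuba03}, whose restrictions $T_\sigma\cap V_\beta$ lie in $V_\gamma$ together with their rank functions, since every infinite branch of $T_\sigma$ is bounded below $\gamma$.)

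With $\theta$ in hand the reflection is routine. Since $\la$ is Mahlo it is in particular regular, and $\bbQ_\la$ is semiprecipitous, so $\seq{V_\la,\in}\vDash\theta$. By the reflection theorem applied inside $V_\la$, the set $\{\alpha<\la\mid \seq{V_\alpha,\in}\prec_\theta\seq{V_\la,\in}\}$ contains a club of $\la$, hence so does $\{\alpha<\la\mid \seq{V_\alpha,\in}\vDash\theta\}$. Because $\la$ is Mahlo, the set of regular $\alpha<\la$ is stationary in $\la$; intersecting, there are stationarily many regular $\alpha<\la$ with $\seq{V_\alpha,\in}\vDash\theta$, and for each such $\alpha$ the characterization above gives that $\bbQ_\alpha$ is semiprecipitous. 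Thus $\comprh{\alpha<\la}{\text{$\bbQ_\alpha$ is semiprecipitous where $\alpha$ is regular}}$ is stationary in $\la$.

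The main obstacle is the second step: verifying that ``player I has no winning strategy in $\Gamma_\gamma$'' really drops from $\Pi^1_1$ to first-order over $\seq{V_\gamma,\in}$ once $\gamma$ is regular. Everything there hinges on the boundedness below $\gamma$ of plays and of the strategy-trees of Proposition~\ref{prop:Usuba03}, on keeping the rank-function data witnessing well-foundedness inside $V_\gamma$, and on the (Burke) reduction of the $\Sigma^1_1$-payoff game $\Gamma_\gamma$ to the closed auxiliary game. The final reflection step is entirely standard.
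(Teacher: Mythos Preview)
Your reflection step at the end is fine, but the crux --- that semiprecipitousness of $\bbQ_\gamma$ is first-order over $\seq{V_\gamma,\in}$ for regular $\gamma$ --- is not established by the argument you give. You claim that ``player I has a winning strategy in the closed auxiliary game'' is equivalent to the disjunction over $n$ of ``player I can force player II to be stuck within $n$ moves'', so that its negation is the first-order conjunction $\bigwedge_n \neg\varphi_n$. That equivalence fails for open games in general: a winning strategy for player I in an open game need not come with a uniform finite bound on the number of rounds, and conversely ``for every $n$ player II can survive $n$ rounds'' does not produce a single strategy for II surviving all rounds. (The parenthetical route through the trees $T_\sigma$ has the same defect: $T_\sigma$ is indexed by a strategy $\sigma$, itself a second-order object over $V_\gamma$, so quantifying over all such trees is again $\Pi^1_1$.) Note also that for merely regular, non--strong-limit $\gamma$ a condition $p\in\bbQ_\gamma$ may have $|p|\ge\gamma$, so a function $f:p\to\gamma$ need not lie in $V_\gamma$; even the moves of the relevant game are then not internal to $V_\gamma$.

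The paper's proof sidesteps the expressibility question entirely. It introduces the game $\Gamma^{\mathrm{local}}_\gamma$ (the variant of $\Gamma^\ast_\gamma$ in which player I's functions $f_n$ take values in $\gamma$), observes that this game is closed for player I, and hence by Gale--Stewart obtains an actual winning strategy $\tau$ for player II in $\Gamma^{\mathrm{local}}_\lambda$. One then reflects not a sentence but the single object $\tau$: by a routine closure argument there is a club of $\gamma<\lambda$ for which $\tau$ sends finite sequences of moves from $V_\gamma$ back into $V_\gamma$, and for such $\gamma$ the restriction of $\tau$ is a winning strategy for II in $\Gamma^{\mathrm{local}}_\gamma$ (any play there is literally a play in $\Gamma^{\mathrm{local}}_\lambda$, where $\tau$ wins). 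Intersecting this club with the stationary set of regular cardinals below the Mahlo $\lambda$ yields the conclusion. The point is that possession of a \emph{specific} winning strategy reflects down along a club; the bare \emph{existence} of one need not.
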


Before we present the proof of this proposition, we state its
corollary:

\begin{corollary}\label{temp}
  The semiprecipitousness of $\Qg$, in general, cannot
  imply the precipitousness of $\Qg$.
\end{corollary}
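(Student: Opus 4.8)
The plan is to read the statement off from Steel's theorem (quoted in the introduction) together with Proposition~\ref{prop:katakana-u} and Proposition~\ref{prop:katakana-a}. The point is that a Mahlo cardinal, which need not be weakly compact, already reflects semiprecipitousness, whereas the conclusion of Proposition~\ref{prop:katakana-a} carries genuine large cardinal strength; so if semiprecipitousness always implied precipitousness, Mahlo reflection would force too much.

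First I would fix, assuming the consistency of $\mathrm{ZFC}$ together with infinitely many Woodin cardinals, a model $M$ of $\mathrm{ZFC}$ in which there are infinitely many Woodin cardinals and $\LR \models \mathrm{AC}$; such an $M$ is provided by Steel's theorem \cite{Steel}. All remaining work takes place inside $M$. Let $\delta$ be the least Woodin cardinal. Since every Woodin cardinal is Mahlo --- the $<\delta$-strong cardinals below $\delta$ form a stationary subset of $\delta$ and are in particular inaccessible --- $\delta$ is a Mahlo cardinal, hence inaccessible; and by Woodin's theorem (see \cite{Larson}) $\bbQ_\delta$ is presaturated, hence precipitous, hence semiprecipitous.

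Next I would run the reflection. Proposition~\ref{prop:katakana-u} applied with $\la = \delta$ gives that $S = \{\alpha < \delta \mid \alpha \text{ is regular and } \bbQ_\alpha \text{ is semiprecipitous}\}$ is stationary in $\delta$. I claim that $\bbQ_\alpha$ fails to be precipitous for some $\alpha \in S$; such a $\bbQ_\alpha$ is then the desired example, being semiprecipitous by the very definition of $S$. Suppose not, so $\bbQ_\alpha$ is precipitous for every $\alpha \in S$. Then $\{\alpha < \delta \mid \bbQ_\alpha \text{ is precipitous}\} \supseteq S$ is stationary, hence cofinal, in $\delta$; since $\delta$ is inaccessible and $\bbQ_\delta$ is presaturated, Proposition~\ref{prop:katakana-a} yields that every set of reals in $\LR$ satisfies the regularity properties, and in particular $\LR \models$ ``every set of reals is Lebesgue measurable''. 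This contradicts $\LR \models \mathrm{AC}$. Hence some $\bbQ_\alpha$ with $\alpha \in S$ is semiprecipitous but not precipitous, so inside $M$ semiprecipitousness of a countable stationary tower does not imply its precipitousness; as $M$ is consistent relative to infinitely many Woodin cardinals, no such implication is a theorem.

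The step I expect to be most delicate is calibrating the external inputs so that they land at the right level: one must be sure that $\delta$ is genuinely Mahlo yet \emph{not} weakly compact, so that Proposition~\ref{prop:katakana-u} (and not Proposition~\ref{prop:katakana-i}) is the applicable reflection principle, and that the model furnished by Steel's theorem really retains its Woodin cardinals while satisfying $\LR \models \mathrm{AC}$. Everything else is bookkeeping with the implications ``presaturated $\Rightarrow$ precipitous $\Rightarrow$ semiprecipitous'' recorded earlier and with the absoluteness of Lebesgue measurability between $\LR$ and $V$.
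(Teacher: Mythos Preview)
Your argument is correct and follows essentially the same route as the paper: reflect semiprecipitousness below a Woodin cardinal via Proposition~\ref{prop:katakana-u}, then observe that if these reflected points were all precipitous, Proposition~\ref{prop:katakana-a} would yield the regularity properties for $\LR$, contradicting the known limitation on what a single Woodin cardinal can prove. The only difference is cosmetic: the paper argues by contradiction from the hypothetical implication and appeals to the bare fact that one Woodin cardinal cannot yield the regularity properties, whereas you explicitly fix Steel's model with $\LR\models\mathrm{AC}$ to witness the failure; your closing worry about $\delta$ not being weakly compact is unnecessary, since Proposition~\ref{prop:katakana-u} only needs Mahloness and nothing in the argument cares whether $\delta$ is also weakly compact.
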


\begin{proof}
  Suppose otherwise.  Assume that $\lambda$ is a Woodin cardinal.
  By Woodin's result, we know that $\Ql$ is presaturated.

  Since a Woodin cardinal is Mahlo, by Proposition~\ref{prop:katakana-u}, $\comprh{\alpha<\lambda}{\text{$\Qa$ is semiprecipitous}}$ is
  stationary.  So, by our assumption, $\comprh{\alpha<\lambda}{\text{$\Qa$ is precipitous}}$ is
  stationary.  Then, by Proposition~\ref{prop:katakana-a}, every set of reals in $\LR$
  satisfies the regularity properties.  But the existence of one Woodin cardinal
  cannot imply this conclusion. \qed 
\end{proof}

To prove Proposition~\ref{prop:katakana-u}, we use the following variant of Burke's
game $\Gg$ to characterize the precipitousness of $\Qg$.

Let $\Ggast$ denote the following two player game of length $\omega$:
At the $n$-th inning of the game player I plays $\langle p_n,f_n\rangle$ with
$p_n\in\Qg$, $f_n:p_n\to\mathrm{ON}$, and player II plays $q_n\in\Qg$ alternately with
the requirements: $p_0\ge q_0\ge p_1\ge q_1\ge\cdots$ and
$\forall x\in p_{n+1}(f_{n+1}(x)\in f_n(x\cap\bigcup p_n))$.  Player I wins if and only if the
game does not stop.  By the proof of Burke's theorem, we see that
$\Qg$ is precipitous if and only if player I does not have a winning
strategy.  This variant of Burke's game was inspired by a similar
game due to Goldring \mycite{Goldring} characterizing
precipitousness of ideals over $\mathcal P_\kappa\lambda$.  Note that one advantage of the
game $\Ggast$ over $\Gg$ is that, if player I does not have a winning
strategy in $\Ggast$, then, by the theorem of Gale and Stewart, player II does have a winning strategy.

\begin{proof}[of Proposition~\ref{prop:katakana-u}]
  Suppose that $\Ql$ is semiprecipitous for a Mahlo cardinal $\lambda$.
  For an uncountable limit ordinal $\gamma$, we consider the following
  game $\Gglocal$ which is a localized version of $\Ggast$.

  The game $\Gglocal$ is almost identical to $\Ggast$ with the added
  requirement that the function $f_n$, played by player I, must take
  its values in $\gamma$.  It is easy to see that player I does not have a
  winning strategy in $\Gglocal$ is equivalent to $\Qg$ is semiprecipitous.
  Since player I does not have a winning strategy for the game
  $\Gamma^{\text{local}}_\lambda$, player II does have a winning strategy $\tau$ for that game.  From the
  Mahloness of $\lambda$, by letting player II play the strategy $\tau$, it is not
  too difficult to see that the set
  $\comprh{\gamma<\lambda}{\text{$\tau$ is a winning strategy for player II in $\Gglocal$ with $\gamma$ regular}}$  is stationary
  in $\lambda$.
  \qed
\end{proof}

 It turns out that a  large cardinal which is weaker than  Mahlo suffices to reflect semiprecipitousness to points of cofinality $\omega$.
\begin{proposition}\label{prop:note-u}
  If $\Ql$ is semiprecipitous for an inaccessible cardinal $\lambda$,
  then there is a club $C$ in $\la$
  such that, for every $\gamma \in C$,
  if $\cf(\gamma)=\om$ then $\bbQ_\gamma$ is semiprecipitous.
\end{proposition}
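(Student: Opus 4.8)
The plan is to reflect a winning strategy for player II from the variant $\Gamma^{\text{local}}_\lambda$ of Burke's game down to the corresponding games at club-many ordinals below $\lambda$; the structural feature that makes this work is that $\Gamma^{\text{local}}_\mu$ is a closed game for player I.

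First I would recall, from the proof of Proposition~\ref{prop:katakana-u}, that for an uncountable limit ordinal $\mu$ player I has no winning strategy in $\Gamma^{\text{local}}_\mu$ iff $\bbQ_\mu$ is semiprecipitous, and --- since player I wins exactly the runs of length $\omega$ --- by Gale--Stewart this is equivalent to player II having a winning strategy in $\Gamma^{\text{local}}_\mu$. Apply this at $\mu=\lambda$: since $\lambda$ is inaccessible, every $p\in\bbQ_\lambda$ has size $<\lambda$, so every $f:p\to\lambda$ has bounded range and $\Gamma^{\text{local}}_\lambda$, together with a strategy for it, can be coded as a subset of $V_\lambda$; fix such a winning strategy $\tau$ for player II. Then fix a well-ordering $<^\ast$ of $V_\lambda$ and let $C=\comprh{\gamma<\lambda}{\seq{V_\gamma,\in,<^\ast\cap V_\gamma,\tau\cap V_\gamma}\prec\seq{V_\lambda,\in,<^\ast,\tau}}$. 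As $\lambda$ is inaccessible, $C$ is a club in $\lambda$, and every $\gamma\in C$ is a strong limit cardinal. I claim $\bbQ_\gamma$ is semiprecipitous for every $\gamma\in C$ with $\cf(\gamma)=\omega$, which proves the proposition.

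So fix such a $\gamma$ and write $\gamma=\sup_{n<\omega}\gamma_n$ with $\seq{\gamma_n}$ strictly increasing; it suffices to give player II a winning strategy in $\Gglocal$. Player II will maintain, alongside the actual run of $\Gglocal$, a ``shadow'' run of $\Gamma^{\text{local}}_\lambda$ lying inside $V_\gamma$ in which player II follows $\tau$. The one delicacy --- and the point where $\cf(\gamma)=\omega$ is used --- is that in $\Gglocal$ player I may play $f_n:p_n\to\gamma$ with range cofinal in $\gamma$ (since $\gamma$ is singular), so the partial run need not lie in $V_\gamma$ and $\tau$ cannot be applied by elementarity. To get around this: when player I plays $\seq{p_n,f_n}$, player II writes $p_n=\bigcup_{k<\omega}\comprh{y\in p_n}{f_n(y)<\gamma_k}$, and by $\sigma$-completeness of the nonstationary ideal on $[\bigcup p_n]^\omega$ picks $k_n$ with $\hat p_n:=\comprh{y\in p_n}{f_n(y)<\gamma_{k_n}}$ stationary; then $\hat p_n\in\bbQ_\gamma$ with $\hat p_n\le p_n$ (a stationary subset of $[\bigcup p_n]^\omega$ has union $\bigcup p_n$), and $f_n\restriction\hat p_n$ has range below $\gamma$, so $f_n\restriction\hat p_n\in V_\gamma$. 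Player II takes $\seq{\hat p_n,f_n\restriction\hat p_n}$ to be player I's $n$-th shadow move --- legal there, since $\hat p_n\le p_n\le q_{n-1}$ and the pointwise descent condition is inherited from the actual run --- computes $q_n:=\tau(\text{shadow run through }\seq{\hat p_n,f_n\restriction\hat p_n})$, which lies in $\bbQ_\gamma$ by elementarity of $V_\gamma$, and plays that $q_n$ in both runs; since $q_n\le\hat p_n\le p_n$ this is a legal move of $\Gglocal$. If the actual run lasted $\omega$ moves, the shadow run would be a length-$\omega$ run of $\Gamma^{\text{local}}_\lambda$ in which player II followed $\tau$, contradicting that $\tau$ is winning (player II wins $\Gamma^{\text{local}}_\lambda$ only when the run stops). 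So every run of the strategy stops, it is winning for player II in $\Gglocal$, and $\bbQ_\gamma$ is semiprecipitous.

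The main obstacle I expect is the bookkeeping in the previous paragraph: verifying that replacing $f_n$ on $p_n$ by $f_n\restriction\hat p_n$ on the stationary set $\hat p_n$ still produces a legal run of $\Gamma^{\text{local}}_\lambda$ consistent with $\tau$ --- in particular that the pointwise descent requirement survives the restriction and that $\bigcup\hat p_n=\bigcup p_n$ --- and that $\tau$'s responses stay inside $\bbQ_\gamma$. The step ``some $\comprh{y\in p_n}{f_n(y)<\gamma_{k_n}}$ is stationary'' is precisely where $\cf(\gamma)=\omega$ is essential (it uses $\sigma$-, not $\omega_1$-, completeness of the nonstationary ideal), and the argument genuinely fails for $\gamma$ of uncountable cofinality --- which is presumably why the statement is conditional on $\cf(\gamma)=\omega$, whereas the Mahlo reflection of Proposition~\ref{prop:katakana-u} instead produces $\gamma$ that are regular, hence inaccessible, so that this difficulty does not arise.
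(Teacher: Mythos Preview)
Your proof is correct and follows essentially the same approach as the paper: both fix a winning strategy $\tau$ for player II in $\Gamma^{\text{local}}_\lambda$, reflect it down to $\gamma$ via elementarity, and use $\cf(\gamma)=\omega$ with $\sigma$-completeness of the nonstationary ideal to truncate player I's function $f_n$ to a bounded-range restriction so that the resulting shadow run stays inside the elementary substructure and $\tau$'s responses land in $\bbQ_\gamma$. The only cosmetic difference is that you work with $\langle V_\gamma,\in,{<^\ast}\cap V_\gamma,\tau\cap V_\gamma\rangle\prec\langle V_\lambda,\in,{<^\ast},\tau\rangle$ and explicitly define the club $C$, whereas the paper uses an $M\prec H_\theta$ with $\tau\in M$ and $M\cap\lambda=\gamma$ (implicitly with $V_\gamma\subseteq M$, so that $\langle\hat p_n,f_n\restriction\hat p_n\rangle\in M$).
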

\begin{proof}
Since $\bbQ_\la$ is semiprecipitous,
player II in $\Gamma^{\mathrm{local}}_\la$ has a winning strategy $\tau$.

Take a sufficiently large regular cardinal, and take an elementary submodel $M \prec H_\theta$ containing
all relevant objects satisfying $M \cap \la \in \la$ and $\cf(M \cap \la)=\om$.
Let $\gamma=M \cap \la$. We shall show that player II in $\Gamma^{\mathrm{local}}_\gamma$
has a winning strategy. Let us describe player II's strategy.

Suppose player I plays $\seq{p_0,f_0}$.
Note that $p_0 \in \bbQ_\gamma \subseteq M$ but $f_0$ may not be an element of $M$,
since $\mathrm{range}(f_0)$ could be cofinal in $\gamma$.
However, since $\cf(\gamma)=\om$,
there is $\gamma'<\gamma$ such that
$\{x \in p_0\mid f(x) \le \gamma'\}$ is stationary.
Let $\gamma*$ be the least ordinal such that $p_0'=\{x \in p_0 \mid f(x) \le \gamma*\} \in \bbQ_\gamma$.
Then we have $\seq{p_0', f_0 \restriction p_0'} \in M$,
and we can identify $\seq{p_0', f_0 \restriction p_0'}$ as player I's play in
$\Gamma^{\mathrm{local}}_\la$.
Let $q_0=\tau(\seq{p_0', f_0 \restriction p_0'}) \in M \cap \bbQ_\la=\bbQ_\gamma$
be player II's  response. Note that $q_0 \le p_0' \le p_0$ in $\bbQ_\gamma$.
If $\seq{p_1, f_1}$ is a response of player I in $\Gamma^{\mathrm{local}}_\gamma$,
by repeating the above procedure, we can find $p_1' \subseteq p_1$ with
$\mathrm{range}(f_1 \restriction p_1')$  bounded in $\gamma$.
Since $\bigcup p_0=\bigcup p_0'$,
for every $x \in p_1'$, we have $f_1(x) \in f_0(x \cap \bigcup p_0)=f_0(x \cap \bigcup p_0')$.
Hence $\seq{p_1',f_1\restriction p_1'}, q_0, \seq{p_0',f_0\restriction p_0'}$ satisfy
the requirements of a play in $\Gamma^{\mathrm{local}}_\lambda$.
Since $\seq{p_0', f_0 \restriction p_0'},\seq{p_1',f_1\restriction p_1'} \in M$,
we let player II play  $q_1=\tau(\seq{p_0', f_0 \restriction p_0'},\seq{p_1',f_1\restriction p_1'}) \in M$,
and so on.
Since $\tau$ is a winning strategy of player II in $\Gamma^{\mathrm{local}}_\la$,
it is easy to see that the strategy above is a winning strategy of player II in $\Gamma^{\mathrm{local}}_\gamma$.
\qed
\end{proof}

\par
\begin{remark}
We note that, by the proof of Corollary~\ref{temp}, the last proposition indicates that the hypothesis ``$\om<\cf(\la)$'' of Proposition~\ref{prop:Usuba03}  cannot be dropped.
\end{remark}

In the late 1980's, the first author was in Los Angeles and
received a letter from Gaisi Takeuti.  He was inquiring about results
concerning $\LR$ and large cardinals.  The first author still
remembers the letter that professed Takeuti's passion and
enthusiasm for set theory and logic.  We regret that we did not
have an opportunity to ask Professor Takeuti's view on the
revolutionary leap in person.

\subsection*{Acknowledgments}
The first author's research was supported by JSPS KAKENHI Grant Nos. 18K03394.
The second author's research was supported by JSPS KAKENHI Grant Nos. 18K03403 and 18K03404.

%
%

\end{document}